\newtheorem{theorem}{Theorem}[section]
\newtheorem{lemma}[theorem]{Lemma}
\newtheorem{corollary}[theorem]{Corollary}
\newtheorem{definition}[theorem]{Definition}
\newtheorem{example}[theorem]{Example}
\newtheorem{remark}[theorem]{Remark}
\newtheorem{proposition}[theorem]{Proposition}
\newtheorem{setup}[theorem]{Setup}
\newcommand{\sk}{{\ensuremath{\sf k }}}
\newcommand{\m}{\ensuremath{\mathfrak m}}
\DeclareMathOperator{\soc}{soc}
\DeclareMathOperator{\im}{im}
\DeclareMathOperator{\Hom}{Hom}
\DeclareMathOperator{\pdim}{pdim}
\DeclareMathOperator{\depth}{depth}
\begin{document}

\title{\textbf{Embeddings of Canonical Modules and Resolutions of Connected Sums}}

\author[Ela Celikbas]{Ela Celikbas}
\address{Department of Mathematics, West Virginia University, Morgantown, WV 26506.}
\email{ela.celikbas@math.wvu.edu}

\author[Jai Laxmi]{Jai Laxmi}
\address{Department of Mathematics, I.I.T. Bombay, Powai, Mumbai 400076.}
\email{jailaxmi@math.iitb.ac.in}

\author{Jerzy Weyman}
\address{Department of Mathematics, University of Connecticut, Storrs, CT 06269.}
\email{jerzy.weyman@uconn.edu}

\date{\today}

\subjclass[2010]{Primary 13D02, 13D40, 13H10, 20C30; Secondary 13F55.}
\keywords{squarefree monomial ideal, canonical module, Gorenstein ring, connected sum. }


\thanks{Jerzy Weyman was partially supported by NSF grant DMS-1400740.}

\begin{abstract}
For an ideal  $I_{m,n}$ generated by all square-free monomials of degree $m$ in a polynomial ring $R$ with $n$ variables, we obtain a specific embedding of a canonical module of $R/I_{m,n}$ to $R/I_{m,n}$ itself. The construction of this explicit embedding depends on a minimal free $R$-resolution of an ideal generated by $I_{m,n}$. Using this embedding, we give a resolution of connected sums of several  copies of certain Artinian $\sk$-algebras where $\sk$ is a field.

\end{abstract}

\maketitle

\section{Introduction}

For a Cohen-Macaulay ring $S$ with a canonical module $\omega_S$, it is well-known that, if $S$ is generically Gorenstein (e.g.
$S$ is reduced), then $\omega_S$ can be identified with an ideal of $S$, that is, $\omega_S$ embeds into $S$; see, for example \cite[3.3.18]{BH}. In this paper we give an explicit construction of such an embedding for a certain ring. More precisely, if $R$ is a polynomial ring in $n$ variables over a field $\sk$, $I_{m,n}$ is the ideal of $R$ generated by all square-free monomials of degree $m$ and $\omega_{R/I_{m,n}}$ is the canonical module of $R/I_{m,n}$, then, in Theorem \ref{inj}, we establish an explicit standard graded embedding of $\omega_{R/I_{m,n}}$ into $R/I_{m,n}$. Our motivation for this study comes from obtaining minimal free resolutions of \emph{connected sums} of Gorenstein rings. As given in \cite{ACJY}, a connected sum of several Gorenstein rings $S_i$ is a Gorenstein ring $S$  that is a special quotient of the fiber product (pullback) of $S_i$'s. Indeed, as a consequence of our argument, we give a construction of a resolution of a connected sum of several copies of $S_i:=\sk[x]/(x^{e_i+1})$ over a field $\sk$; see Corollary \ref{Connectedsum}.

In order to construct a specific embedding from $\omega_{R/I_{m,n}}$ to $R/I_{m,n}$, we use generators of the $R/I_{m,n}$-module $\Hom_R(\omega_{R/I_{m,n}},R/I_{m,n})$. In section \ref{HOM}, we give a set of generators of $\Hom_R(R/I_{m,n},R/I_{m,n})$ in Theorem \ref{Mingens}. Moreover, as an immediate result of Theorem \ref{Mingens}, we get a presentation of  $\Hom_R(\omega_{R/I_{m,n}},R/I_{m,n})$. Section \ref{POINC} deals with the computation of Hilbert-Poincar\'{e} functions of $R/I_{m,n}$ and $\omega_{R/I_{m,n}}$. 

The main result of this paper is Theorem \ref{inj} which gives a specific standard graded embedding of a canonical module 
$$\psi:=\psi_{m,n}:\omega_{R/I_{m,n}}\longrightarrow R/I_{m,n}.$$
 In Corollary \ref{omg},  the image of $\psi_{m,n}$ is identified with an ideal of $R/I_{m,n}$ generated by maximal minors of a certain Vandermonde-like matrix $D$. We use Theorem \ref{inj} and Corollary \ref{omg} to get a resolution of a Gorenstein ring obtained from an embedding of a canonical module of $R/I_{m,n}$ in Corollary \ref{Gor1}. 

In section \ref{sec6} we specialize to $m=2$. In this case, in Theorem \ref{m=2}, we give another,  $\mathbb{N}^n$-graded embedding of a canonical module of $R/I_{2,n}$ into the ring $R/I_{2,n}$. As a corollary of this theorem, a canonical module of $R/I_{2,n}$ is identified with an $\mathbb{N}^n$-graded ideal of $R/I_{2,n}$. 
The mapping cone of the map of free resolutions over $R$ covering the embedding $\omega_{R/I_{m,n}}\longrightarrow R/I_{m,n}$ gives a minimal free resolution of the connected sum of algebras $S_i:=\sk[x]/(x^{e_i+1})$.

Section \ref{Prelim} contains known results regarding the main tools used in the rest of the paper including the definition of connected sums, resolutions of the ideals generated by square-free monomials of a given degree and of corresponding Stanley-Reisner rings.

\section{Preliminaries}\label{Prelim}
\subsection{Notation}
\begin{enumerate}[a)]
\item For a positive integer $n$, let $[n]=\{1,\ldots,n\}$. If $\sigma\subset [n]$, then $|\sigma|$ denotes the number of elements contained in $\sigma$.
\item Let $R=\sk[x_1,\ldots,x_n]$ be a polynomial ring in $n$ variables over a field $\sk$ with $x_1>\ldots >x_n $. We order the monomials in $R$ with graded lexicographic order. 
\item Let $m$ and $n$ be positive integers with $m\leq n$, then $I_{m,n}$ denotes an ideal generated by all square-free monomials of degree $m$ in $n$ variables. Furthermore, $\omega_{R/I_{m,n}}$ denotes a canonical module of $R/I_{m,n}$. 

\item For an $R$-module $M$, $\ell(M)$ and $\mu(M)$ denote the length and the minimal number of generators of $M$, respectively.
\item For a commutative Noetherian ring $T$, $\dim(T)$ denotes the Krull dimension of $T$.
\item Let $M=\oplus_{i\geq 0} M_i$ be a graded $R$-module. The Hilbert-Poincar$\mathbf{\acute{\text{e}}}$  function of $M$ is the formal power series 
$H_{M}(t)=\sum_{i\geq 0}\ell(M_i)t^i.$   
\item Let $(T,\m,\sk)$ be an Artinian local ring. Then the {\it socle} of $T$ is $\soc(T) = (0:_T\m)$.
\item For a Noetherian local ring $T$ and a $T$-module $M$, a finite presentation of $M$ is an exact sequence $T^{\oplus m}\rightarrow T^{\oplus n}\rightarrow M\rightarrow 0$ with $m,n$ positive integers.
\end{enumerate}

\subsection{Connected Sums}
\begin{definition}\label{CSdef}{\rm
Let $S_i=\sk[x_i]/(x_i^{e_i+1})$, $\soc(S_i)=(x_i^{e_i})$, and $J=\langle x_ix_j,x_i^{e_i}-x_1^{e_1}|1\leq i\leq n\rangle$ where $e_i\geq 1$ be the ideal in $R:=\sk[x_1,\ldots ,x_n]$ defining the connected sum $S_1\#_{\sk}\ldots \#_{\sk}S_n$ of the algebras $S_i$ (compare \cite{ACJY} for the definition of connected sums). Therefore we have
$$S_1\#_{\sk}\ldots \#_{\sk}S_n=R/J.$$ }
\end{definition}
\begin{remark}\label{CSrmk}
With notation in Definition \ref{CSdef}, $S_1\#_{\sk}\ldots\#_{\sk}S_n$ is Gorenstein by \cite{ACJY}. 
\end{remark}

\subsection{Specht Modules and  Free Resolution of the Ring $\bf {R/I_{m,n}}$}
We recall the definition of Specht module $S^{(p,1^q)}$ associated to a hook partition $(p,1^q)$ of $n$  where $p,q$ are nonnegative integers. We follow \cite[Section 7.4]{FW}. Let $n=p+q$ and let $\mathbb{S}_n$ be a symmetric group on $[n]$.  Let $(p,1^q)$ be a hook partition of $n$. An oriented column tabloid  of shape $(p,1^q)$ is filling of Young diagram of $(p,1^q)$  with positive integers  $1,2,\ldots, n$, with each number appearing once,  which is skew-symmetric in the first column and symmetric in the remaining rows.

The Specht module $S^{(p,1^q)}$ is the $\sk$-vector space generated by the equivalence classes $[T]$ of oriented column tabloids  of shape $(p,1^q)$ with entries in $[n]$ modulo the following relations: 
\begin{enumerate}[a)]
\item Alternating columns: $\sigma[T]={\rm sign}(\sigma)[T]$ for all $\sigma\in \mathbb{S}_n$ fixing the columns of $T$ (so in the case of hook, just permuting the numbers in the first column). 
\item Shuffling relations: $[T]=\sum[T']$, where sum is over all $T'$ acquired from $T$ by exchanging the element of the second column of $[T]$ with one of the element of the first column of $T$.
\end{enumerate}

We recall some facts about Specht modules associated to a hook partition $(p,1^q)$.
\begin{enumerate}[a)]
\item The Specht module $S^{(p,1^q)}$ is a $\sk$-vector space. By using hook length formula from \cite[Theorem 20.1]{JM}, we have 
$${\rm dim}(S^{(p,1^q)})=\binom{p+q-1}{q}.$$
\item The symmetric group $\mathbb{S}_n$ acts on $S^{(p,1^q)}$ by permuting the numbers in oriented column tabloids.
\item An oriented column tabloid of shape $(p,1^q)$ is called standard tableau of shape $(p,1^q)$ if the entries in each row and column are increasing (in the case of hooks it means the entries in the first column are increasing and the first entry in the first column is $1$). The equivalence classes of standard tableaux of shape $(p,1^q)$ form a $\sk$-basis of $S^{(p,1^q)}$. 
Let $SYT([p+q],(p,1^q))$ denote the set of standard tableaux of shape $(p,1^q)$ with entries  $1,2,\ldots ,p+q$ (each number appearing once).
\end{enumerate}

In the remaining part of this subsection, we state the results from \cite{GF}.
\begin{definition}{\rm
(a) Let $n$, $m$ and $i$ be integers. For $1\leq m\leq n$ and $0\leq k\leq n-m$,   
\begin{equation}\label{U}
U_k^{m,n}:={\rm Ind}^{\mathbb{S}_n}_{\mathbb{S}_{m+k}\times\mathbb{S}_{n-m-k}}
(S^{(m,1^k)}\otimes_{\sk}S^{(n-m-k)}).
\end{equation}
Here $S^{(n-m-k)}$ is the Specht module $S^{(p,1^q)}$ with $p:=n-m-k, q:=0$.
The right hand side of Equation \ref{U} is the $\sk[\mathbb{S}_n]$-module induced by the $\sk[\mathbb{S}_{m+k}\times\mathbb{S}_{n-m-k}]$-module $S^{(m,1^k)}\otimes_{\sk}S^{(n-m-k)}$. If any of the inequalities involving $n,m$, and $i$ are violated, then we set $U_i^{m,n}:=0$.\\
(b) A $\sk[\mathbb{S}_n]$-module $F_k^{m,n}$ is defined as
\begin{equation}
F_k^{m,n}:=U_k^{m,n}\otimes_{\sk}R(-m-k).
\end{equation}

}
\end{definition} 

\begin{remark}\label{Tabl}{\rm 
Let $n,m$, and $k$ be positive integers with $1\leq m\leq n$ and $0\leq k\leq n-m$. 
\begin{enumerate}[(a)]
\item The module $U_k^{m,n}$ is generated by the equivalence classes of oriented column tabloids of shape $(m,1^k)$, filled with numbers $1,2,\ldots, n$ without repetitions. Moreover, the equivalence classes of standard tableaux of shape $(m,1^k)$ form a $\sk$-basis of $U_k^{m,n}$. 
\item The module $F_k^{m,n}$ is a free $R$-module generated by the equivalence classes of oriented column tabloids of shape $(m,1^k)$, filled with numbers $1,2,\ldots, n$ without repetitions. 
\item The equivalence classes of standard tableaux of the shape $(m,1^k)$ with entries in $[n]$ (without repetitions)  form an $R$-basis of $F_k^{m,n}$ and  the rank of $F^{m,n}_k$ is $\beta_k:={\rm rank}(F_k^{m,n})=\binom{n}{m+k}\binom{m+k-1}{k}$.

\end{enumerate}
}
\end{remark}
 We define an $R$-linear map 
 $$\partial_k^{m,n} :F^{m,n}_k\longrightarrow F^{m,n}_{k-1}$$
 by setting
 $$\partial_k^{m,n}([T]):=\sum_{p=0}^{k}(-1)^{k-p}x_{i_p}[T\setminus i_p]$$
 where $[T]$ is an oriented column tabloid of shape $(m,1^k)$, and $T\setminus i_p$ is an oriented column tabloid of shape $(m, 1^{k-1})$ obtained from $T$ by omitting the number $i_p$ in position $p-1$ in the first column of $T$.

\begin{proposition}\label{Gal}{\rm
Let $n,m$ and $k$ be positive integers with $m\leq n$ and $0\leq k\leq n-m$. 
Then $(\mathbb{F}^{m,n}_{\bullet},\partial_\bullet^{m,n})$ is a complex of free $R$-modules which is a minimal free resolution of the $R$-module $R/I_{m,n}$. 
This complex is $\mathbb{S}_n$-equivariant, where $\mathbb{S}_n$ acts on $F^{m,n}_k$ diagonally (the action on $R$ just permutes the variables $x_i$).
}
\end{proposition}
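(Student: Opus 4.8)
The plan is to verify five things in turn: (1) each $\partial_k^{m,n}$ is well defined on the quotient defining $U_k^{m,n}$; (2) $(\mathbb{F}_\bullet^{m,n},\partial_\bullet^{m,n})$ is a complex; (3) it is minimal; (4) after adjoining the augmentation $\varepsilon\colon F_0^{m,n}\to R$, $[S]\mapsto x_S:=\prod_{i\in S}x_i$, the complex $\cdots\to F_1^{m,n}\xrightarrow{\partial_1^{m,n}}F_0^{m,n}\xrightarrow{\varepsilon}R\to 0$ is acyclic with $H_0=R/I_{m,n}$; and (5) this augmented complex is $\mathbb{S}_n$-equivariant. Steps (1), (2), (3), (5) are routine. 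For (1) one checks that the right-hand side of the displayed formula for $\partial_k^{m,n}([T])$ is killed by the alternating-column and the shuffling relations of $U_k^{m,n}$: permuting the first-column entries permutes the terms while the omitted-position bookkeeping supplies the sign of the permutation, and the shuffling case is a similar but longer sign count. Part (2) is a Koszul-type computation---in $\partial_{k-1}^{m,n}\partial_k^{m,n}([T])$ the summand deleting the first-column entries $i_p,i_q$ in the order $p$ then $q$ cancels the one deleting them in the order $q$ then $p$, the accumulated signs being opposite once one accounts for the position shift caused by the first deletion---and also $\varepsilon\circ\partial_1^{m,n}=0$ since the two terms of $\partial_1^{m,n}([T])$ both map to $\pm x_T$. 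For (3), every summand of $\partial_k^{m,n}([T])$ carries a factor $x_{i_p}\in\m$. For (5), an element $\sigma\in\mathbb{S}_n$ relabels tableau entries without moving their positions, so $\sigma(T\setminus i_p)=\sigma T\setminus\sigma(i_p)$ and $\sigma(x_{i_p})=x_{\sigma(i_p)}$ give $\partial_k^{m,n}\circ\sigma=\sigma\circ\partial_k^{m,n}$, while $\varepsilon$ is visibly equivariant.

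The substance is acyclicity, which I would prove by induction on $n$. The base cases are $m=1$, where $\mathbb{F}_\bullet^{1,n}$ is the Koszul complex on $x_1,\dots,x_n$ and the augmented complex resolves $R/I_{1,n}=R/\m$, and $n=m$, where $I_{m,m}=(x_1\cdots x_m)$ and the augmented complex is $0\to R(-m)\xrightarrow{\varepsilon}R\to 0$. For the inductive step ($2\le m<n$), put $R'=\sk[x_1,\dots,x_{n-1}]$. From $I_{m,n}=I_{m,n-1}R+x_nI_{m-1,n-1}R$ together with the identity $I_{m,n-1}R\cap x_nI_{m-1,n-1}R=x_nI_{m,n-1}R$ (both sides are monomial ideals and the equality is verified on supports) one obtains the short exact sequence of graded $R$-modules
\[
0\longrightarrow I_{m-1,n-1}R(-1)\xrightarrow{\ \cdot\,x_n\ }I_{m,n}\longrightarrow I_{m,n-1}R/x_nI_{m,n-1}R\longrightarrow 0 .
\]
By the inductive hypothesis $\mathbb{F}_\bullet^{m-1,n-1}(-1)\otimes_{R'}R$ is an $R$-free resolution of the left-hand submodule, while $\bigl(\mathbb{F}_\bullet^{m,n-1}\otimes_{R'}R\bigr)\otimes_R\operatorname{Kos}(x_n)$, with $\operatorname{Kos}(x_n)\colon 0\to R(-1)\xrightarrow{x_n}R\to 0$, is an $R$-free resolution of the right-hand quotient, because $x_n$ is a nonzerodivisor on $I_{m,n-1}R$.

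The bridge is combinatorial: the standard tableaux of shape $(m,1^k)$ with entries in $[n]$ partition into those not using $n$, those with $n$ at the foot of the first column, and those with $n$ at the right end of the first row; tracking degree shifts, this partition identifies $F_k^{m,n}$, as a graded free $R$-module, with $F_k^{m-1,n-1}(-1)\oplus F_k^{m,n-1}\oplus F_{k-1}^{m,n-1}(-1)$, i.e.\ with the degree-$k$ module of the horseshoe (mapping-cone) resolution attached to the short exact sequence above, the first summand being the resolution of the left-hand submodule and the last two the resolution of the right-hand quotient. Straightening $\partial_k^{m,n}([T])$ back into the standard basis should show that the ``$n$ at the end of the first row'' part is a subcomplex on which $\partial_\bullet^{m,n}$ restricts to the differential of $\mathbb{F}_\bullet^{m-1,n-1}(-1)\otimes_{R'}R$, and that the induced differential on the quotient is the differential of $\bigl(\mathbb{F}_\bullet^{m,n-1}\otimes_{R'}R\bigr)\otimes_R\operatorname{Kos}(x_n)$; in other words, that there is a short exact sequence of complexes
\[
0\to\mathbb{F}_\bullet^{m-1,n-1}(-1)\otimes_{R'}R\to\mathbb{F}_\bullet^{m,n}\to\bigl(\mathbb{F}_\bullet^{m,n-1}\otimes_{R'}R\bigr)\otimes_R\operatorname{Kos}(x_n)\to 0
\]
lifting the short exact sequence of modules. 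Its long exact homology sequence forces $H_i(\mathbb{F}_\bullet^{m,n})=0$ for $i\ge1$ and presents $H_0(\mathbb{F}_\bullet^{m,n})$ as an extension of $I_{m,n-1}R/x_nI_{m,n-1}R$ by $I_{m-1,n-1}R(-1)$, hence of Hilbert series $H_{I_{m,n}}(t)$; since $\varepsilon$ induces a surjection $H_0(\mathbb{F}_\bullet^{m,n})\twoheadrightarrow I_{m,n}$, comparing Hilbert series gives $H_0(\mathbb{F}_\bullet^{m,n})\cong I_{m,n}$, so the augmented complex resolves $R/I_{m,n}$, and it is minimal by (3). This completes the induction.

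I expect the main obstacle to be precisely this straightening step: showing that re-expressing $\partial_k^{m,n}$ in the standard basis produces exactly the block structure above---so that the ``$n$ at the end of the first row'' summand really is closed under the differential and the quotient differential really is the stated Koszul-tensor differential, with no spurious components---and keeping all signs straight throughout. An alternative that sidesteps the induction is to apply the Buchsbaum--Eisenbud acyclicity criterion directly, verifying the rank equalities $\operatorname{rank}\partial_k^{m,n}+\operatorname{rank}\partial_{k+1}^{m,n}=\operatorname{rank}F_k^{m,n}$ and the depth bounds on the ideals of minors of the maps $\partial_k^{m,n}$; but identifying those ideals of minors looks at least as delicate.
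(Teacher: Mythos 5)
Your attempt cannot be measured against an argument in the paper, because the paper does not prove Proposition~\ref{Gal} at all: it is stated as a result quoted from Galetto's paper \cite{GF}, which is where the construction of $\mathbb{F}^{m,n}_\bullet$ and its exactness are established. So what you have written is a self-contained substitute for that citation, and its architecture is sound: the intersection identity $I_{m,n-1}R\cap x_nI_{m-1,n-1}R=x_nI_{m,n-1}R$ is correct, so your short exact sequence of modules holds; the partition of standard hook tableaux according to the position of $n$ does match, rank by rank and twist by twist, the horseshoe built from $\mathbb{F}^{m-1,n-1}_\bullet(-1)$ and $\mathbb{F}^{m,n-1}_\bullet$ tensored with the Koszul complex on $x_n$; and the long-exact-sequence plus Hilbert-series endgame, with $\varepsilon$ furnishing the surjection $H_0(\mathbb{F}^{m,n}_\bullet)\twoheadrightarrow I_{m,n}$, is a clean way to finish. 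The only place where the sketch is not yet a proof is exactly the step you flag, and it does go through; a way to organize it that avoids straightening almost entirely is this. Define $\Phi_k\colon F^{m-1,n-1}_k(-1)\otimes_{R'}R\to F^{m,n}_k$ on arbitrary oriented column tabloids by appending $n$ at the end of the first row: it sends defining relations to defining relations (neither column alternation nor the shuffles involve the appended $n$), sends standard tableaux to standard tableaux, hence is an injection of complexes commuting with $\partial$ on the nose, with image a free direct summand. For the quotient, define $\Psi_k$ onto $F^{m,n-1}_k\otimes_{R'}R\,\oplus\,F^{m,n-1}_{k-1}(-1)\otimes_{R'}R$ by sending a tabloid not containing $n$ to itself, a tabloid with $n$ in the first column to the signed tabloid obtained by moving $n$ to the bottom and deleting it, and a tabloid with $n$ in the first row to $0$; the only nontrivial check is well-definedness on the shuffle relation in which $n$ itself is exchanged between row and column, and that verification reduces to a shuffle relation in $U^{m,n-1}_{k-1}$, after which $\ker\Psi=\im\Phi$ follows by counting standard tableaux and the induced differential is visibly the Koszul-tensor one. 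Note also that your step (1) is not cost-free: well-definedness of $\partial^{m,n}_k$ on the shuffling relations is the same kind of sign count and should be written once carefully. With those two verifications supplied, your induction on $n$ gives a complete and rather elementary proof; what the paper's citation buys instead is brevity and the equivariant package already assembled in \cite{GF}.
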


\subsection{Simplicial Complex and Stanley-Reisner Rings}

\begin{definition}\cite[Definition 5.1.1]{BH} {\rm
Let $V=\{v_1,\ldots,v_n\}$ be a finite set. 
\begin{enumerate}
\item A non-empty set $\Delta$ of subsets of $V$ with the property that $\tau\in \Delta$ whenever $\tau\subset \sigma$ for some $\sigma\in \Delta$ is called a simplicial complex on the vertex set $V$.
The elements of $\Delta$ are called faces, and the dimension, $\dim \sigma$, of a face $\sigma$ is the number $|\sigma|-1$. The dimension of the simplicial complex $\Delta$ is $\dim(\Delta)=\max\{\dim \sigma: \sigma\in \Delta\}$.
\item Let $\sk$ be a field. The Stanley-Reisner ring of the complex $\Delta$ is the homogeneous $\sk$-algebra 
$\sk[\Delta]=\sk[x_1,\ldots,x_n]/I_{\Delta},$
where $I_{\Delta}$ is the ideal generated by all monomials $x_{i_1}\ldots x_{i_s}$ such that $\{v_{i_1},\ldots,v_{i_s}\}\not\in \Delta$. The Krull dimension of the Stanley-Reisner ring $\sk[\Delta]$ is $\dim(\Delta)+1$. 
\end{enumerate} } 
\end{definition}

\begin{lemma}
Let $I_{\Delta}=I_{m,n}$, then $\sk[\Delta]$ is Cohen-Macaulay.
\end{lemma}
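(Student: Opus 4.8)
The plan is to identify the complex $\mathbb{F}^{m,n}_\bullet$ of Proposition \ref{Gal} with the minimal free resolution of $R/I_{m,n}$ and then read off homological invariants. First I would observe that $I_{m,n} = I_\Delta$ where $\Delta$ is the simplicial complex on $[n]$ whose faces are exactly the subsets of size at most $m-1$, i.e. the $(m-2)$-skeleton of the full simplex on $n$ vertices; this is immediate from the definition of the Stanley-Reisner ideal, since a monomial $x_{i_1}\cdots x_{i_s}$ with distinct variables lies in $I_{m,n}$ precisely when $s \ge m$, equivalently when $\{v_{i_1},\dots,v_{i_s}\}$ is not a face of $\Delta$. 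Hence $\dim \Delta = m-2$ and $\dim \sk[\Delta] = \dim \sk[\Delta]= m-1$.

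Next I would compute the projective dimension of $R/I_{m,n}$ as an $R$-module. By Proposition \ref{Gal}, $(\mathbb{F}^{m,n}_\bullet,\partial^{m,n}_\bullet)$ is a minimal free $R$-resolution, and it has length equal to the largest $k$ with $F^{m,n}_k \ne 0$; by Remark \ref{Tabl}(c), $F^{m,n}_k$ has rank $\binom{n}{m+k}\binom{m+k-1}{k}$, which is nonzero exactly for $0 \le k \le n-m$. Therefore $\pdim_R(R/I_{m,n}) = n-m$. By the Auslander--Buchsbaum formula, $\depth_R(R/I_{m,n}) = n - \pdim_R(R/I_{m,n}) = m$. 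Since $R/I_{m,n} = \sk[\Delta]$ has Krull dimension $m-1 < m$, this reasoning gives the wrong inequality, so instead I would be more careful: localizing, the depth of $R/I_{m,n}$ as a module over itself equals its depth over $R$, and one should recompute. The correct count is that $\sk[\Delta]$ has dimension $m-1$ (as $\Delta$ is the $(m-2)$-skeleton), so $\pdim_R \sk[\Delta] = n-(m-1) = n-m+1$; one then checks that the resolution of Proposition \ref{Gal} indeed has length $n-m+1$ by re-indexing, or alternatively appeals directly to the fact that $R/I_{m,n} \to R/I_{m,n}$ has $\depth = \dim$, so Cohen-Macaulayness follows once $\depth_R \sk[\Delta] = n-m+1 = \pdim$ is matched against $\dim \sk[\Delta]$ via Auslander--Buchsbaum.

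Thus the cleanest route is: (i) identify $\Delta$ and compute $\dim \sk[\Delta] = m-1$; (ii) use Proposition \ref{Gal} and Remark \ref{Tabl}(c) to get $\pdim_R \sk[\Delta] = n-m+1$; (iii) apply Auslander--Buchsbaum over $R$ to conclude $\depth_R \sk[\Delta] = n - (n-m+1) = m-1 = \dim \sk[\Delta]$; (iv) since depth equals Krull dimension, $\sk[\Delta]$ is Cohen--Macaulay. Alternatively, one can invoke Reisner's criterion and verify that the reduced homology $\tilde H_i(\mathrm{lk}_\Delta \sigma; \sk)$ vanishes below the top dimension for every face $\sigma$ — since every link of a face in a skeleton of a simplex is again a skeleton of a simplex, and skeleta of simplices are well known to have homology concentrated in top degree — but the Auslander--Buchsbaum argument via the explicit resolution already in hand is shorter and self-contained.

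The main obstacle is simply keeping the bookkeeping on dimensions and homological degrees consistent: one must be careful that $I_{m,n}$ corresponds to the $(m-2)$-skeleton (not the $(m-1)$-skeleton), so that $\dim \sk[\Delta] = m-1$, and that the resolution in Proposition \ref{Gal} has length $n-m+1$ rather than $n-m$ once one accounts for $F_0$; reconciling the rank formula $\binom{n}{m+k}\binom{m+k-1}{k}$ with this length is the one genuinely fiddly point. Everything else is a direct citation of \cite[Section 5]{BH} or the Auslander--Buchsbaum formula.
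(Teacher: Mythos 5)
Your argument is correct and is essentially the paper's own proof: identify $\Delta$ as the $(m-2)$-skeleton of the simplex so that $\dim \sk[\Delta]=m-1$, extract $\pdim_R \sk[\Delta]=n-m+1$ from the resolution of Proposition \ref{Gal}, and apply the graded Auslander--Buchsbaum formula to get $\depth \sk[\Delta]=m-1=\dim \sk[\Delta]$. Just be sure the projective dimension count is justified non-circularly --- not by ``the Krull dimension forces $n-m+1$'' (which presupposes Cohen--Macaulayness) but by the observation you make at the end: $F_0^{m,n}$ has rank $\binom{n}{m}$ and maps onto the minimal generators of $I_{m,n}$, so the resolution of $R/I_{m,n}$ is the augmented complex $\mathbb{F}_\bullet^{m,n}\to R\to R/I_{m,n}\to 0$, of length $n-m+1$.
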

\begin{proof}
If $I_{\Delta}=I_{m,n}$, then all monomials $x_{i_1}\ldots x_{i_{m-1}}\in I_{\Delta}$, so $\{v_{i_1},\ldots,v_{i_{m-1}}\}\not \in \Delta$. Hence, $\dim(\Delta)=m-2$. Then $\dim(\sk[\Delta])=m-1$. 

By Proposition \ref{Gal}, projective dimension of $\sk[\Delta]$ is $n-m+1$. By graded Auslander-Buchsbaum formula, $\depth(\sk[\Delta])=m-1$. Thus, $\sk[x_1,\ldots,x_n]/I_{m,n}$ is Cohen Macaulay.  
\end{proof}

\begin{remark}\label{MinOmg}
By Proposition \ref{Gal}, $(\mathbb{F}_{\bullet}^{m,n},\partial_\bullet^{m,n})$ is a minimal free resolution of $R/I_{m,n}$. Let $\mathbb{G}_{\bullet}^{m,n}=\Hom_R(\mathbb{F}_{\bullet}^{m,n},R)$ be the dual complex. Then $\mathbb{G}_{\bullet}^{m,n}$ is a minimal free $R$-resolution of $\omega_{R/I_{m,n}}$.   
\end{remark}

\section{Generators of Hom}\label{HOM}

The goal of this section is to find the generators (Theorem \ref{Mingens}) and a presentation (Corollary \ref{pres}) of the $R$-module $\Hom_R(\omega_{R/I_{m,n}},R/I_{m,n})$. We start with the example
$n=4, m=2$.

\begin{example}\label{ex1}{\rm
Let $R=\sk[x_1,x_2,x_3,x_4]$ and $I_{2,4}=\langle x_1x_2,x_1x_3,x_1x_4,x_2x_3,x_2x_4,x_3x_4\rangle$ be an ideal of $R$. Let $[T_{[4]\setminus{\{2\}}}]=\Bigg[\scriptsize\young(12,3,4)\Bigg]$, $[T_{[4]\setminus{\{3\}}}]=\Bigg[\scriptsize\young(13,2,4)\Bigg]$ and $[T_{[4]\setminus{\{4\}}}]=\Bigg[\scriptsize\young(14,2,3)\Bigg]$.
The formulas for differentials in the complex $\mathbb{F}^{4,2}_\bullet$ are
\begin{align*}
\partial_2^{2,4}([T_{[4]\setminus{\{2\}}}])&=x_1\bigg[\scriptsize\young(32,4)\bigg]-x_3\bigg[\scriptsize\young(12,4)\bigg]+x_4\bigg[\scriptsize\young(12,3)\bigg]\\
&=x_1\bigg[\scriptsize\young(23,4)\bigg]-x_1\bigg[\scriptsize\young(24,3)\bigg]-x_3\bigg[\scriptsize\young(12,4)\bigg]+x_4\bigg[\scriptsize\young(12,3)\bigg].\\
\partial_2^{2,4}([T_{[4]\setminus{\{3\}}}])&=x_1\bigg[\scriptsize\young(23,4)\bigg]-x_2\bigg[\scriptsize\young(13,4)\bigg]+x_4\bigg[\scriptsize\young(13,2)\bigg].\\
\partial_2^{2,4}([T_{[4]\setminus{\{4\}}}])&=x_1\bigg[\scriptsize\young(24,3)\bigg]-x_2\bigg[\scriptsize\young(14,3)\bigg]+x_3\bigg[\scriptsize\young(14,2)\bigg].\\
\end{align*}

Let $P$ be the matrix of $\partial_2^{2,4}$ with respect to the bases of standard tableaux in modules $F^{4,2}_2$ and $F^{4,2}_1$. 
$$P=\begin{bmatrix*} 0&0&x_4\\0&x_4&0\\ 0&0&-x_3\\ x_3&0&0\\ 0&-x_2&0\\ -x_2&0&0\\ 0&x_1&x_1\\ x_1&0&-x_1\end{bmatrix*}$$

Columns are listed in order $T_{[4]\setminus{\{4\}}}=\scriptsize\young(14,2,3)$, $T_{[4]\setminus{\{3\}}}=\scriptsize\young(13,2,4)$, and $T_{[4]\setminus{\{2\}}}=\scriptsize\young(12,3,4)$, and rows 

\smallskip
are listed in order $\scriptsize\young(12,3)$, $\scriptsize\young(13,2)$, $\scriptsize\young(12,4)$, $\scriptsize\young(14,2)$, $\scriptsize\young(13,4)$, 
$\scriptsize\young(14,3)$, $\scriptsize\young(23,4)$, and $\scriptsize\young(24,3)$.

\smallskip

Then the transpose of $P$, denoted by $P^T$, gives a matrix presentation of $\omega_{R/I_{2,4}}$. For $2\leq i\leq 4$, let $f_{\{i\}}:\omega_{R/I_{2,4}}\rightarrow R/I_{2,4}$ be defined as 
\begin{equation}\label{eqex1}
f_{\{i\}}([T_{[4]\setminus\{j\}}])=\begin{cases}x_i,\quad  \text{if}\;\; i=j\\ 0,\;\quad \text{if}\;\; i\neq j.\end{cases}
\end{equation}
In order to show that $f_{\{i\}}$ is well defined, it is enough to prove that $f_{\{i\}}$ satisfies the relations of  $P^T$. Note that the entry $x_i$ is missing in the column corresponding to $\partial_{2}^{2,4}([T_{[4]\setminus \{i\}}])$ in $P$, hence $ f_{\{i\}}$ satisfies the relations of $P^T$.

The tableau $[T_{[4]\setminus \{1\}}]=\Bigg[\scriptsize\young(21,3,4)\Bigg]$ is expressed in terms of standard tableaux such as $$\Bigg[\scriptsize\young(21,3,4)\Bigg]=\Bigg[\scriptsize\young(12,3,4)\Bigg]-\Bigg[\scriptsize\young(13,2,4)\Bigg]+\Bigg[\scriptsize\young(14,2,3)\Bigg].$$ Let 
\begin{equation}\label{eqex2}
f_{\{1\}}([T_{[4]\setminus \{2\}}])=-f_{\{1\}}([T_{[4]\setminus \{3\}}])=
f_{\{1\}}([T_{[4]\setminus \{4\}}])=x_1.
\end{equation}
 Since $\partial_{2}^{2,3}([T_{[4]\setminus \{1\}}])=\partial_{2}^{2,4}([T_{[4]\setminus \{2\}}])-\partial_2^{2,4}([T_{[4]\setminus \{3\}}])+\partial_2^{2,4}([T_{[4]\setminus \{4\}}])$, there is no term involving $x_1$ in $\partial_2^{2,3}([T_{[4]\setminus \{1\}}])$. Hence, $f_{\{1\}}$ is well defined.

Now suppose $\psi:\omega_{R/I_{2,4}}\rightarrow R/I_{2,4}$ satisfies $\psi([T_{[4]\setminus {\{i\}}}])=c+u$ for some $c\in \sk$, and $u\in \langle x_1,\ldots, x_n\rangle$. Then $\psi$ satisfies the relations of $P^T$, which implies, $cx_i=0$,  hence $c=0$. 
Then, taking into account relations given by the first six columns of $P^T$,  we can write
$$ \psi([T_{[4]\setminus {\{4\}}}])=\sum_{e\ge 1}a_1^{(e)}x_1^{(e)}+\sum_{e\ge 1}a_4^{(e)}x_4^{(e)},$$
$$ \psi([T_{[4]\setminus {\{3\}}}])=\sum_{e\ge 1}b_1^{(e)}x_1^{(e)}+\sum_{e\ge 1}b_3^{(e)}x_3^{(e)},$$
$$ \psi([T_{[4]\setminus {\{2\}}}])=\sum_{e\ge 1}c_1^{(e)}x_1^{(e)}+\sum_{e\ge 1}c_2^{(e)}x_2^{(e)},$$

where $a_i^{(e)}$, $b_i^{(e)}$, and $c_i^{(e)}$ are all in $\sk$.

Using the relations from last two columns of $P^T$,  we get $a_1^{(e)}=c_1^{(e)}  =-b_1^{(e)}$ for each $e$.
This means 
$$\psi =  (\sum_{e\ge 1}c_1^{(e)}x_1^{(e-1)})f_1 + (\sum_{e\ge 1}c_2^{(e)}x_2^{(e-1)})f_2 + (\sum_{e\ge 1}b_3^{(e)}x_3^{(e-1)})f_3 +(\sum_{e\ge 1}a_4^{(e)}x_4^{(e-1)})f_4.$$

 This shows that $\{f_{\{1\}},f_{\{2\}},f_{\{3\}},f_{\{4\}}\}$ is a minimal generating set of $\Hom_R(\omega_{R/I_{2,4}},R/I_{2,4})$.

}
\end{example}

In the light of the example above, the following theorem gives a general description of a minimal generating set of $\Hom_R(\omega_{R/I_{m,n}},R/I_{m,n})$.
\begin{theorem}\label{Mingens}
For $1<j_1<\ldots< j_{m-1}\leq n$, let $\Theta=\{j_1,\ldots,j_{m-1}\}$. Suppose 
$f_{j_1,\ldots,j_{m-1}}$ and $f_{1,j_2,\ldots,j_{m-1}}$ are maps from $\omega_{R/I_{m,n}}$ to $R/I_{m,n}$ defined as 

$f_{j_1,\ldots,j_{m-1}}([T_{[n]\setminus \Gamma }])=\begin{cases} x_{j_1}x_{j_2}\ldots x_{j_{m-1}},\quad \text{if}\;\Gamma=\Theta \\ 0 \;\;\qquad\qquad, \text{otherwise.} \end{cases}$ \;\;\;\; \;\;and 

\[
f_{1,j_2,\ldots,j_{m-1}}([T_{[n]\setminus \Gamma }])=\begin{cases} x_{1}x_{j_2}\ldots x_{j_{m-1}}, \quad \text{if}\; \Gamma=\Theta\;\;\text{or}\;\; \Gamma=(\Theta\setminus\{j_1\})\cup \{l\}\;\; \text{for}\;\; 1\neq l\in [n]\setminus \Theta,\\ 0\;\;\qquad\qquad, \text{otherwise.}
\end{cases}
\]
Then $\{f_{j_1,\ldots,j_{m-1}},f_{1,j_2,\ldots,j_{m-1}}: 1<j_1<\ldots<j_{m-1}\leq n\}$ is a minimal generating set of $\Hom_R(\omega_{R/I_{m,n}},R/I_{m,n})$.
\end{theorem}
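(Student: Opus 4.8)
The plan is to mimic Example \ref{ex1} in full generality, using the matrix presentation of $\omega_{R/I_{m,n}}$ obtained by dualizing the resolution $(\mathbb{F}^{m,n}_\bullet,\partial^{m,n}_\bullet)$ from Proposition \ref{Gal}. By Remark \ref{MinOmg}, $\Hom_R(\omega_{R/I_{m,n}},R/I_{m,n})$ is computed from the transposed presentation: an element $\psi\in\Hom_R(\omega_{R/I_{m,n}},R/I_{m,n})$ is the same as a choice of values $\psi([T_{[n]\setminus\Gamma}])\in R/I_{m,n}$ for $\Gamma$ ranging over $m$-subsets of $[n]$ containing $1$ (these index the standard-tableau $R$-basis of $F^{m,n}_1$, via $T_{[n]\setminus\Gamma}$), subject to the relations read off from the columns of the matrix $P$ of $\partial^{m,n}_2$ — namely, for each standard tableau $[T]$ of shape $(m,1)$ one gets one linear relation among the $\psi([T_{[n]\setminus\Gamma}])$ with coefficients the variables $x_i$ appearing in $\partial^{m,n}_2([T])$.

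First I would check that the proposed maps $f_{j_1,\dots,j_{m-1}}$ and $f_{1,j_2,\dots,j_{m-1}}$ are well defined, i.e. that they respect all the relations coming from $\partial^{m,n}_2$. For $f_{j_1,\dots,j_{m-1}}$ this is the same observation as in the example: the value is nonzero only on the single basis element $[T_{[n]\setminus\Theta}]$, and in $\partial^{m,n}_2([T])$ for any standard $(m,1)$-tableau $T$, the coefficient of $[T_{[n]\setminus\Theta}]$ (when it occurs) is a variable $x_i$ with $i\notin\Theta$, so multiplying by $x_{j_1}\cdots x_{j_{m-1}}$ lands in $I_{m,n}$ (it contains a squarefree degree-$m$ monomial $x_i x_{j_1}\cdots x_{j_{m-1}}$). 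For $f_{1,j_2,\dots,j_{m-1}}$ one must also handle the tableaux $[T_{[n]\setminus\Gamma}]$ with $\Gamma=(\Theta\setminus\{j_1\})\cup\{l\}$ containing $1$: here I would use the shuffling relations to rewrite the non-standard tableau $T_{[n]\setminus((\Theta\setminus\{j_1\})\cup\{1\})}$ (which is the one whose $\partial_2$ would pick up an $x_1$ coefficient) as an alternating sum of standard tableaux $T_{[n]\setminus\Gamma}$ with $1$ in the first column, exactly as $[T_{[4]\setminus\{1\}}]$ was expanded in the example; the sign-weighted values of $f_{1,j_2,\dots,j_{m-1}}$ on these are arranged precisely so that the $x_1$-term cancels, which is why the "or $\Gamma=(\Theta\setminus\{j_1\})\cup\{l\}$" clause is present. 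This is the bookkeeping-heavy part but it is a direct generalization of the computation already carried out.

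Next I would prove that these maps generate. Given arbitrary $\psi\in\Hom_R(\omega_{R/I_{m,n}},R/I_{m,n})$, I would first argue the constant term of each $\psi([T_{[n]\setminus\Gamma}])$ vanishes (a relation of the form $cx_i=0$ in $R/I_{m,n}$ forces $c=0$, since degree-one elements are nonzerodivisors enough to kill only $c=0$), so each value lies in $\m/ \m\cap\ldots$; then, using the relations indexed by $(m,1)$-tableaux whose $\partial_2$ involves only variables $x_i$ with $i\notin$ the relevant index set, I would show each $\psi([T_{[n]\setminus\Gamma}])$ is a polynomial only in the variables $\{x_i : i\in\Gamma\}$ (all "cross" monomials are forced to zero or forced equal across the basis, just as $a_1^{(e)}=c_1^{(e)}=-b_1^{(e)}$ in the example). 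Subtracting off suitable $R/I_{m,n}$-multiples of the $f_{j_1,\dots,j_{m-1}}$'s removes the pure-power-in-$x_{j_1}\cdots$ parts, and the remaining $x_1$-divisible parts, which the relations force to be coherent across all $\Gamma\ni 1$ of the prescribed form, are absorbed by multiples of the $f_{1,j_2,\dots,j_{m-1}}$'s. Finally, minimality: one shows no proper subset generates, by exhibiting for each generator $g$ in the list an element of $\Hom$ (or a point of $\Hom\otimes\sk$) not in the submodule generated by the rest — equivalently checking that the images of the $f$'s in $\Hom_R(\omega,R/I_{m,n})\otimes_R\sk$ are linearly independent, which follows from the fact that each $f$ has a "leading" basis element on which it alone is nonzero modulo $\m$.

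The main obstacle I anticipate is the well-definedness of $f_{1,j_2,\dots,j_{m-1}}$ and, relatedly, the surjectivity argument for the $x_1$-divisible part: both require a clean description, for a general $m$-subset $\Gamma\ni 1$, of how the non-standard tableaux arising in $\partial^{m,n}_2$ of various $(m,1)$-tableaux expand into the standard basis via the shuffling relations, together with tracking all the signs. Organizing this expansion — essentially an inclusion-exclusion over which element of $\{1\}\cup\{j_2,\dots,j_{m-1}\}$ sits in the "tail" column — is where the real work lies; everything else is a faithful transcription of Example \ref{ex1}.
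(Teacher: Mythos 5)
Your overall plan — test the two families of maps against the transposed presentation matrix of $\omega_{R/I_{m,n}}$, use the same relations to show an arbitrary homomorphism is a combination of them, and finish with a support argument for minimality — is exactly the paper's, but your identification of that presentation is wrong, and it is the foundation everything else rests on. By Remark \ref{MinOmg}, $\omega_{R/I_{m,n}}$ is presented by the dual of the \emph{last} differential of the resolution, $\partial^{m,n}_{n-m}\colon F^{m,n}_{n-m}\to F^{m,n}_{n-m-1}$: its generators correspond to the standard tableaux of shape $(m,1^{n-m})$ — this is what $T_{[n]\setminus\Gamma}$ means, with $\Gamma\subset\{2,\dots,n\}$, $|\Gamma|=m-1$, the tail of the first row, and $[n]\setminus\Gamma$ the first column — and its relations correspond to standard tableaux of shape $(m,1^{n-m-1})$. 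You instead assert that the values $\psi([T_{[n]\setminus\Gamma}])$ are indexed by the standard-tableau basis of $F^{m,n}_{1}$ and that the relations are read off from $\partial^{m,n}_{2}$. That is true only when $n-m=2$, which is precisely the situation of Example \ref{ex1} ($m=2$, $n=4$, so $\partial_2=\partial_{n-m}$); you have generalized the literal index $2$ from the example instead of $n-m$. In general $F^{m,n}_1$ has rank $m\binom{n}{m+1}$, which is not $\mu(\omega_{R/I_{m,n}})=\binom{n-1}{m-1}$, and $\Hom_R(\omega_{R/I_{m,n}},R/I_{m,n})$ cannot be computed from $\partial_2$; so, as written, your well-definedness checks and your generation argument are carried out against the wrong matrix. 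Two smaller symptoms of the same confusion: $\Gamma$ should range over $(m-1)$-subsets of $[n]$ not containing $1$ (or, if you prefer, the first rows $\{1\}\cup\Theta$), not over $m$-subsets containing $1$; and ``one relation for each standard tableau $[T]$ of shape $(m,1)$ with coefficients the variables appearing in $\partial_2([T])$'' does not type-check, since $\partial_2$ is defined on shape $(m,1^2)$ — the relations are the rows of $P$, i.e.\ for a fixed lower tableau one collects the coefficients with which it occurs in $\partial_{n-m}$ of the various top tableaux.

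Once this is repaired — replace $F_1,\partial_2$ by $F_{n-m},\partial_{n-m}$ throughout — your sketch does line up with the paper's proof: the column of $P$ belonging to $[T_{[n]\setminus\Theta}]$ involves only the variables $x_i$ with $i\notin\Theta$ (they come from the first column of the tableau), so each relation multiplies $x_\Theta$ by such a variable and lands in $I_{m,n}$, giving well-definedness of $f_\Theta$; the non-standard tableau $[T_{[n]\setminus\{1,j_2,\dots,j_{m-1}\}}]$ is expanded by the shuffling relation into standard ones, which is how the second family is handled; and generation proceeds by using the relations to constrain the variable support of each value, as you describe. For minimality, note that your claim that each generator has a basis element on which ``it alone is nonzero'' is not literally true: on $[T_{[n]\setminus\Theta}]$ both $f_\Theta$ and every $f_{\{1\}\cup\tau}$ with $\tau\subset\Theta$, $|\tau|=m-2$, are nonzero; what saves the argument, as in the paper, is that their values are monomials with pairwise distinct variable supports, so no one of them can be an $R/I_{m,n}$-combination of the others. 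So the gap is not the plan but the homological bookkeeping the plan is built on.
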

\begin{proof}
First note that by Remark \ref{Tabl}, $\mathcal{B}_{k}:=\{[T]: T\in SYT((m,1^k),[n])\}$ is a basis of $F_k^{m,n}$. For $1=i_0<j_1<\ldots<j_{m-1}\leq n$ and $1=i_0<i_1<i_2<\ldots<i_{n-m}\leq n$, we set standard tableau of shape $(m,1^{n-m})$ as
\begin{equation}\label{tab}
\ytableausetup{centertableaux}
\ytableausetup
{mathmode, boxsize=2.2em}
[T_{[n]\setminus \{j_1,\ldots,j_{m-1}\}}]=[T_{1,i_1,\ldots, i_{n-m}}]=
\begin{ytableau}
\scriptstyle
i_0 & j_1 & j_2&\ldots & j_{m-1} \\
i_1 \\
i_2\\
\vdots\\
i_{n-m}
\end{ytableau} 
\end{equation}

By Proposition \ref{Gal}, we get the differential $\partial^{m,n}_{n-m}:F_{n-m}^{m,n}\rightarrow F_{n-m-1}^{m,n}$ as  

 \begin{equation}
 \partial^{m,n}_{n-m}([T_{1,i_1,\ldots,i_{n-m}}])=\sum_{k=0}^{n-m}(-1)^{n-m-k}x_{i_k}[T_{1,i_1,\ldots,i_{n-m}}\setminus i_k].
\end{equation}

Let $P$ be the matrix of $\partial_{n-m}^{m,n}$ with respect to the bases $\mathcal{B}_{n-m}$ and $\mathcal{B}_{n-m-1}$. Then $P^T$, the transpose of $P$, is a presentation of $\omega_{R/I_{m,n}}$ by Remark \ref{MinOmg}. 

In order to show that $f_{j_1,\ldots,j_{m-1}}$ and $f_{1,j_2,\ldots,j_{m-1}}$ are well defined, it is enough to see that $f_{j_1,\ldots,j_{m-1}}$ and $f_{1,j_2,\ldots,j_{m-1}}$ satisfy the relations of $P^T$. Since the column with respect to $\partial^{m,n}_{n-m}([T_{1,i_1,\ldots,i_{n-m}}])$ does not involve $x_{j_k}$, the corrensponding row in $P^T$ has no $x_{j_k}$ as well. Thus $f_{j_1,\ldots,j_{m-1}}$ satisfies the relations of $P^T$. A non-standard tableau $[T_{[n]\setminus \{1,j_2,\ldots,j_{m-1}\}}]$ can be expressed in terms of standard tableaux as 

$$[T_{[n]\setminus \{1,j_2,\ldots,j_{m-1}\}}]=[T_{[n]\setminus \{j_1,\ldots,j_{m-1}\}}]+\sum_{k=0}^{n-m}(-1)^k[T_{[n]\setminus \{i_k,j_2,\ldots,j_{m-1}\}}].$$

By direct computation one sees that column with respect to $\partial_{n-m}^{m,n}([T_{[n]\setminus \{1,j_2,\ldots,j_{m-1}\}}])$ does not involve $x_1$ and $x_{j_k}$ for $k=2,\ldots,m-1$. Therefore, $f_{\{1,j_2,\ldots,j_{m-1}\}}$ satisfies the relations of $P^T$, hence $f_{\{1,j_2,\ldots,j_{m-1}\}}$ is well defined.  

We now claim that $\{f_{\tau}:\tau\subset [n], |\tau|=m-1\}$ is a generating set of $\Hom_R(\omega_{R/I_{m,n}},R/I_{m,n})$. Let $\varphi\in\Hom_R (\omega_{R/I_{m,n}}, R/I_{m,n})$. For $1<l_1<\ldots<l_{m-1}\leq n$, let $\sigma=\{l_1,\ldots,l_{m-1}\}\subset [n]$.  Since $\varphi([T_{[n]\setminus\sigma }])\in R/I_{m,n}$, we can write $$\varphi([T_{[n]\setminus \sigma}])=\sum_{\tau=\{p_1,\ldots,p_{m-1}\}\subset [n]}a_{\tau} x_{p_1}^{n_{p_1}}\ldots x_{p_{m-1}}^{n_{p_{m-1}}}+\sum_{\tau=\{p_1,\ldots,p_k\}\subset [n], k<m-1}b_{\tau}x_{p_1}^{m_{p_1}}\ldots x_{p_k}^{m_{p_{k}}}$$
where $n_{p_k},m_{p_l}\geq 0$ and $a_{\tau},b_{\tau}\in \sk$. The fact that $\varphi$ satisfies the relations of $P^T$ implies $b_{\tau}=0$ and $a_{\tau}=0$ provided $\tau\neq \sigma$ or $\tau\neq\{1,l_2,\ldots,l_{m-1}\}$. Thus we get  
\begin{equation}\label{eqt}
\varphi([T_{[n]\setminus \sigma}])=c_{\sigma}f_{\sigma}([T_{[n]\setminus \sigma}])+c_{(\sigma\setminus \{l_1\})\cup \{1\}}f_{(\sigma\setminus \{l_1\})\cup\{1\}}([T_{[n]\setminus \sigma}])
\end{equation}
where $c_{\sigma}=a_{\sigma}x_{l_1}^{n_{l_1}-1}\ldots x_{l_{m-1}}^{n_{l_{m-1}-1}}$ and $c_{(\sigma\setminus \{l_1\})\cup \{1\}}=a_{(\sigma\setminus \{l_1\})\cup \{1\}}x_1^{n_1-1}x_{l_2}^{n_{l_2}-1}\ldots x_{l_{m-1}}^{n_{l_{m-1}-1}}$.

For every $\Gamma\subset [n]$ with $1\not\in \Gamma$ and $|\Gamma|=m-1$, the equivalence class of a standard tableau $[T_{[n]\setminus \Gamma}]$ is in $\mathcal{B}_{n-m}$. By Equation \ref{eqt}, for $c_{\tau}\in R/I_{m,n}$, we get $$\varphi([T_{[n]\setminus \Gamma}])=\sum_{\tau\subset [n],|\tau|=m-1}c_{\tau}f_{\tau}([T_{[n]\setminus\Gamma}]).$$

Since $[T_{[n]\setminus \Gamma}]\in \mathcal{B}_{n-m}$, we have $\varphi([T])=\sum_{\tau\subset [n],|\tau|=m-1}c_{\tau}f_{\tau}([T])$ for every oriented column tabloid $[T]$. Hence $\varphi\in \langle f_{\tau}:\tau\subset [n],|\tau|=m-1\rangle$. This proves that  $\langle f_{\tau}:\tau\subset [n],|\tau|=m-1\rangle$ is a generating set of $\Hom_R(\omega_{R/I_{m,n}},R/I_{m,n})$.

If  $\langle f_{\tau}:\tau\subset [n],|\tau|=m-1\rangle$ is not a minimal generating set, then for some $\Gamma\subset [n]$ with $|\Gamma|=m-1$, $f_{\Gamma}=\sum_{\tau\subset [n],|\tau|=m-1,\tau\neq \Gamma}a_{\tau}f_{\tau}$ where $a_{\tau}\in\sk$. Then for $1\in \gamma$ and $|\gamma\cap \Gamma|=m-2$,  $x_{\Gamma}=a_{\gamma}x_{\gamma}$ which is not possible. 
\end{proof}

As a consequence of Theorem \ref{Mingens}, we get a finite presentation as stated below.
\begin{corollary}\label{pres}
Let $S=R/I_{m,n}$, $\sigma \subset [n]$, and $|\sigma|=m-1$. Suppose $f_{\sigma}:\omega_{S}\rightarrow S$ is a map defined in Theorem \ref{Mingens}. Let $\mathcal{C}=\{e_{\sigma}:\sigma \subset [n]\}$ and $\mathcal{D}=\{p_{\sigma\cup\{i\}}:\sigma\subset [n],i\not\in \sigma\}$ be bases of $S^{\binom{n}{m-1}}$ and $S^{m\binom{n}{m}}$, respectively. Then 
\[S^{m\binom{n}{m}}\xrightarrow{\mu} S^{\binom{n}{m-1}}\xrightarrow{\phi} \Hom_R(\omega_{S},S)\rightarrow 0\]
with $\phi(e_{\sigma})=f_{\sigma}$ and $\mu(p_{\sigma\cup\{i\}})=x_ie_{\sigma}$ is a finite presentation of $\Hom_R(\omega_{S},S)$.
\end{corollary}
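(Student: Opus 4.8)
The plan is to deduce Corollary~\ref{pres} directly from Theorem~\ref{Mingens}. By Theorem~\ref{Mingens} the set $\{f_\sigma : \sigma\subset[n],\ |\sigma|=m-1\}$ (where $f_\sigma$ means $f_{j_1,\dots,j_{m-1}}$ if $1\notin\sigma$ and $f_{1,j_2,\dots,j_{m-1}}$ if $1\in\sigma$) is a \emph{minimal} generating set of $\Hom_R(\omega_S,S)$; in particular $\phi(e_\sigma)=f_\sigma$ defines a surjection $S^{\binom{n}{m-1}}\twoheadrightarrow\Hom_R(\omega_S,S)$. So the only real content is to identify the kernel of $\phi$, i.e. to show that all $S$-linear relations among the $f_\sigma$ are generated by the obvious ones $x_i f_\sigma=0$ for $i\notin\sigma$ (note $x_i f_\sigma$ for $i\in\sigma$ is not zero, which is why we only range over $i\notin\sigma$ in the indexing of $\mathcal D$, giving $m\binom{n}{m}$ generators: there are $\binom{n}{m-1}$ sets $\sigma$, and for each one $n-(m-1)$ choices of $i$, and $\binom{n}{m-1}(n-m+1)=m\binom{n}{m}$).

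First I would verify that $\mu$ lands in $\ker\phi$, i.e. that $x_i f_\sigma=0$ in $\Hom_R(\omega_S,S)$ whenever $i\notin\sigma$. This is immediate from the explicit formulas: $f_\sigma([T_{[n]\setminus\Gamma}])$ is always (a scalar multiple of) the monomial $x_\sigma=x_{j_1}\cdots x_{j_{m-1}}$ or $x_1 x_{j_2}\cdots x_{j_{m-1}}$ up to relabeling, and multiplying by an extra variable $x_i$ with $i\notin\sigma$ produces a squarefree monomial of degree $m$, hence lies in $I_{m,n}$ and is zero in $S$. So $\phi\circ\mu=0$ and $\phi$ descends to a surjection from $\operatorname{coker}\mu=S^{\binom{n}{m-1}}/\operatorname{im}\mu$ onto $\Hom_R(\omega_S,S)$.

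Next I would show this induced surjection is an isomorphism by a Hilbert-series / rank count, or more cleanly by a direct-sum decomposition argument. The module $\operatorname{coker}\mu$ decomposes as $\bigoplus_{\sigma}\bigl(S/(x_i:i\notin\sigma)\bigr)\cdot\bar e_\sigma$ since the relations $x_i e_\sigma$ only involve the single generator $e_\sigma$; and $S/(x_i:i\notin\sigma)\cong\sk[x_j:j\in\sigma]/(\text{squarefree degree }m)$. For $|\sigma|=m-1$ there are no squarefree monomials of degree $m$ in only $m-1$ variables, so this quotient is just the polynomial ring $\sk[x_j:j\in\sigma]$, i.e. the cyclic $S$-module $S/(x_i:i\notin\sigma)$ is free of rank one over that polynomial subring. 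On the other side, going back to the presentation $P^T$ of $\omega_S$ used in the proof of Theorem~\ref{Mingens} (coming from $\partial^{m,n}_{n-m}$), one reads off a presentation of $\Hom_R(\omega_S,S)$ by applying $\Hom_R(-,S)$ to the minimal presentation of $\omega_S$; comparing this with $\operatorname{coker}\mu$ generator-by-generator shows the map is injective as well. Equivalently, I would simply remark that the analysis carried out in the proof of Theorem~\ref{Mingens} — where every $\varphi$ was written uniquely modulo the relations $x_i f_\sigma$ ($i\notin\sigma$) in the form of Equation~\eqref{eqt} — already established that $\ker\phi=\operatorname{im}\mu$, so the statement is essentially a repackaging of that proof.

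The main obstacle, and the step deserving the most care, is the exactness at $S^{\binom{n}{m-1}}$: asserting that \emph{every} relation among the $f_\sigma$ is an $S$-combination of the $x_i e_\sigma$. The danger is a ``mixed'' relation involving several distinct $\sigma$'s simultaneously with non-monomial coefficients. The cleanest way to rule this out is the decomposition $\operatorname{im}\mu=\bigoplus_\sigma (x_i:i\notin\sigma)\,e_\sigma$ noted above, which reduces the question to: is $\bigoplus_\sigma S/(x_i:i\notin\sigma)\to\Hom_R(\omega_S,S)$ injective? This one can settle by checking that both sides have the same Hilbert–Poincaré series (the left side's series being $\binom{n}{m-1}\big/(1-t)^{m-1}$, which I would match against the series of $\Hom_R(\omega_S,S)$ computed from the minimal free resolution $\mathbb{G}^{m,n}_\bullet$ of $\omega_{R/I_{m,n}}$ dualized, using the data in Section~\ref{POINC}), so that a graded surjection of modules with equal Hilbert series is forced to be an isomorphism.
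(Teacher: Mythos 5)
Your surjectivity step, the verification that $x_if_{\sigma}=0$ for $i\notin\sigma$ (so that $\im\mu\subseteq\ker\phi$), and the observation that $\operatorname{coker}\mu\cong\bigoplus_{\sigma}S/(x_i:i\notin\sigma)$ with each summand a polynomial ring in $m-1$ variables are all correct, and this reduction is the same one the paper makes implicitly by normalizing a relation $\sum_{\sigma}a_{\sigma}f_{\sigma}=0$ so that each $a_{\sigma}$ involves only the variables $x_i$ with $i\in\sigma$. The gap is in how you finish. Your preferred route, a Hilbert--Poincar\'e series comparison, presupposes an independent computation of $H_{\Hom_R(\omega_S,S)}(t)$ ``from the dualized resolution $\mathbb{G}^{m,n}_{\bullet}$ using Section \ref{POINC}''; but the dual of $\mathbb{G}^{m,n}_{\bullet}$ is $\mathbb{F}^{m,n}_{\bullet}$, which resolves $R/I_{m,n}$, not $\Hom_R(\omega_S,S)$, and applying $\Hom_R(-,S)$ to a presentation of $\omega_S$ only exhibits $\Hom_R(\omega_S,S)$ as the kernel of a map of free $S$-modules, whose Hilbert series is not determined by the data of Section \ref{POINC} (which gives only $H_S$ and $H_{\omega_S}$). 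Knowing that series is essentially equivalent to the statement you are trying to prove, so the argument is circular at precisely the crucial point. Similarly, the claim that the proof of Theorem \ref{Mingens} ``already established $\ker\phi=\im\mu$'' overstates what that proof shows: it establishes that the $f_{\sigma}$ form a minimal generating set, and Equation \ref{eqt} constrains the values of a homomorphism on standard tableaux, but it does not rule out non-obvious $S$-linear relations among the $f_{\sigma}$; and ``comparing generator-by-generator'' is not an argument.

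What is missing is a short linear-independence argument, and this is exactly what the paper supplies at this point: assuming $\sum_{\sigma}a_{\sigma}f_{\sigma}=0$ with each $a_{\sigma}$ a polynomial only in the variables indexed by $\sigma$ (your normal form modulo $\im\mu$), fix $\tau$ and evaluate at $[T_{[n]\setminus\tau}]$. The term $a_{\tau}x_{\tau}$ consists of monomials whose support is exactly $\tau$, while every other nonzero contribution $a_{\sigma}f_{\sigma}([T_{[n]\setminus\tau}])$ with $\sigma\neq\tau$ consists of monomials supported exactly on $\sigma$; since these monomials involve only $m-1$ distinct variables they are nonzero in $S$, so no cancellation can occur and $a_{\tau}=0$ for every $\tau$, giving $\ker\phi=\im\mu$. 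Inserting this evaluation-and-support argument (or an actual, independent computation of $H_{\Hom_R(\omega_S,S)}(t)$, which you have not provided) in place of the Hilbert series step would complete your proof; as written, exactness at $S^{\binom{n}{m-1}}$ is not established.
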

\begin{proof}
Observe that $f_{\sigma}:\omega_{R/I_{m,n}}\rightarrow R/I_{m,n}$ defined in Theorem \ref{Mingens} is an $R/I_{m,n}$-module homomorphism and $\Hom_R(\omega_{S},S)$ is generated by $\binom{n}{m-1}$ elements. Then we show that $\ker(\phi)=\langle x_ie_{\sigma}:\sigma\subset [n]\rangle$. Since there is a surjective map $\phi:S^{\binom{n}{m-1}}\rightarrow \Hom_R(\omega_{S},S)$ defined by $\phi(e_{\sigma})=f_{\sigma}$, by Theorem \ref{Mingens}, $x_if_{\sigma}=0$ for each $\sigma \subset [n]$ and $i\notin \sigma$. Thus $\phi(x_ie_{\sigma})=0$ and hence $x_ie_{\sigma} \in \ker(\phi)$. 

Assume to the contrary that we have a relation $\sum_{\sigma} a_\sigma f_\sigma =0$ with $a_\sigma$ being a polynomial depending only on the variables $x_i$ with $i\in \sigma$. We need to show that each $a_\sigma =0$. Let us fix a subset $\tau$ and let us apply the zero homomorphism to the tableau $T_{[n]\setminus \tau}$.
We get
$$ 0=a_\tau \prod_{i\in\tau}x_i+\sum_{\sigma\ne\tau} a_\sigma\prod_{i\in\sigma}x_i f_{\sigma} (T_{[n]\setminus \tau}).$$
But the first summand cannot cancel with any other summand since it is the only monomial containing precisely the variables from $\tau$. This shows that $a_{\tau}=0$. 
Therefore \[S^{m\binom{n}{m}}\xrightarrow{\mu} S^{\binom{n}{m-1}}\xrightarrow{\phi} \Hom_R(\omega_{S},S)\rightarrow 0\]
is an exact sequence.   
\end{proof}

\section{Hilbert-Poincar\'{e} function}\label{POINC}
In this section, we compute the Hilbert-Poincar$\mathbf{\acute{\text{e}}}$ functions of $R/I_{m,n}$ and $\omega_{R/I_{m,n}}$ by using Stanley-Reisner rings.

\begin{lemma}\label{Omega}
Let $R/I_{m,n}$ be a standard graded ring. The Hilbert-Poincar\'{e} function of $R/I_{m,n}$ is of the form
$${\displaystyle H_{R/I_{m,n}}(t)=\frac{\sum_{j=0}^{m-1}h_jt^j}{(1-t)^{m-1}}},~~ \text{where}~~ h_j=\dbinom{n-m+j}{j}.$$
Moreover, $H_{\omega_{R/I_{m,n}}}(t)=\dfrac{\sum_{i=0}^{m-1}\alpha_i t^i}{(1-t)^{m-1}}$, where $\alpha_i=\dbinom{n-i-1}{m-i-1}$.
\end{lemma}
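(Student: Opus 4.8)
The plan is to compute the Hilbert--Poincar\'e function of $R/I_{m,n}$ directly from the minimal free resolution $(\mathbb{F}^{m,n}_\bullet,\partial^{m,n}_\bullet)$ of Proposition~\ref{Gal}, and then to obtain the function of $\omega_{R/I_{m,n}}$ by duality.

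First I would use the fact that if $0\to F_{n-m}\to\cdots\to F_1\to F_0=R\to R/I_{m,n}\to 0$ is a graded free resolution with $F_k$ generated in degree $m+k$ (this is exactly how $F^{m,n}_k=U^{m,n}_k\otimes_\sk R(-m-k)$ is graded, by Remark~\ref{Tabl}), then
$$
H_{R/I_{m,n}}(t)=\frac{\sum_{k=0}^{n-m}(-1)^k\beta_k\, t^{m+k}+? }{(1-t)^n},
$$
wait --- more precisely $H_{R/I_{m,n}}(t)=\dfrac{1-\sum_{k=0}^{n-m}(-1)^{k+1}\beta_k t^{m+k}}{(1-t)^n}$ with $\beta_k=\binom{n}{m+k}\binom{m+k-1}{k}$. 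So the numerator of the Hilbert series over $(1-t)^n$ is $K(t):=1+\sum_{k=0}^{n-m}(-1)^{k+1}\binom{n}{m+k}\binom{m+k-1}{k}t^{m+k}$. Since $R/I_{m,n}$ is Cohen--Macaulay of dimension $m-1$ (proved in the lemma just after Proposition~\ref{Gal}), $(1-t)^{n-(m-1)}=(1-t)^{n-m+1}$ divides $K(t)$, and after cancellation we are left with $H_{R/I_{m,n}}(t)=\dfrac{\sum_{j=0}^{m-1}h_j t^j}{(1-t)^{m-1}}$ for integers $h_j$ (the $h$-vector). It then remains to identify $h_j=\binom{n-m+j}{j}$. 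I expect this is cleanest not via polynomial division but by a counting argument: the Stanley--Reisner complex of $I_{m,n}$ is the $(m-2)$-skeleton of the simplex on $[n]$, so a $\sk$-basis of $(R/I_{m,n})_d$ in degree $d$ is the set of monomials of degree $d$ supported on at most $m-1$ variables, giving
$$
\ell\bigl((R/I_{m,n})_d\bigr)=\sum_{s=0}^{m-1}\binom{n}{s}\binom{d-1}{s-1}\quad(d\ge 1),
$$
and $\ell((R/I_{m,n})_0)=1$. Summing the generating function $\sum_d \binom{d-1}{s-1}t^d = t^s/(1-t)^s$ over $s$ gives $H_{R/I_{m,n}}(t)=\sum_{s=0}^{m-1}\binom{n}{s}t^s/(1-t)^s$; putting everything over $(1-t)^{m-1}$ yields numerator $\sum_{s=0}^{m-1}\binom{n}{s}t^s(1-t)^{m-1-s}$, and a Vandermonde-type binomial identity collapses the coefficient of $t^j$ to $\binom{n-m+j}{j}$. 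That identity, $\sum_{s=0}^{j}(-1)^{j-s}\binom{n}{s}\binom{m-1-s}{j-s}=\binom{n-m+j}{j}$, is the combinatorial heart of the first part.

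For the second statement, I would invoke the standard duality for the canonical module of a Cohen--Macaulay graded ring: if $S=R/I_{m,n}$ has dimension $d=m-1$ and $a$-invariant $a$, then $H_{\omega_S}(t)=(-1)^{d}H_S(1/t)$ up to the appropriate sign and shift; concretely, writing $H_S(t)=Q(t)/(1-t)^{d}$ with $Q(t)=\sum_{j=0}^{m-1}h_j t^j$, one has $H_{\omega_S}(t)=\dfrac{t^{?}\,Q(1/t)}{(1-t)^{d}}$ with the exponent chosen so the numerator is a genuine polynomial with nonnegative constant term; since $\deg Q = m-1$ here (because $h_{m-1}=\binom{n-1}{m-1}\ne 0$), the reciprocal polynomial $t^{m-1}Q(1/t)=\sum_{j=0}^{m-1}h_{m-1-j}t^{j}$ already has degree $m-1$, so $H_{\omega_S}(t)=\dfrac{\sum_{i=0}^{m-1}h_{m-1-i}t^i}{(1-t)^{m-1}}$, i.e. $\alpha_i=h_{m-1-i}=\binom{n-m+(m-1-i)}{m-1-i}=\binom{n-1-i}{m-1-i}$, exactly as claimed. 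Alternatively, and perhaps more in the spirit of the paper, one can read $H_{\omega_S}$ off the dual resolution $\mathbb{G}^{m,n}_\bullet=\Hom_R(\mathbb{F}^{m,n}_\bullet,R)$ from Remark~\ref{MinOmg}: $G_k$ is free of rank $\beta_{n-m-k}$ generated in degree $n-(m+(n-m-k))=k-? $ (one must track the grading shift $R(-m-k)\mapsto R(m+k)$ carefully, together with an overall twist by $R(-n)$ coming from $\omega_R$), and the same alternating-sum computation followed by cancellation of $(1-t)^{n-m+1}$ gives the result.

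The main obstacle I anticipate is purely bookkeeping: getting the grading shifts exactly right so that the numerator one produces is the honest $h$-vector (nonnegative, constant term $1$ for $R/I_{m,n}$), and then correctly transporting the twist through $\Hom_R(-,R)$ and the $\omega_R$-twist for the canonical module. Once the shifts are pinned down, the only nontrivial algebra is the binomial identity $\sum_{s=0}^{j}(-1)^{j-s}\binom{n}{s}\binom{m-1-s}{j-s}=\binom{n-m+j}{j}$ (equivalently the divisibility of $K(t)$ by $(1-t)^{n-m+1}$), which follows from Vandermonde's identity or from comparing coefficients in $\bigl(\sum_s \binom{n}{s}t^s(1-t)^{-s}\bigr)(1-t)^{m-1}$; I would state it as a short lemma and dispatch it with a one-line generating-function proof rather than grinding through it term by term.
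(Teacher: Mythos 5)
Your proposal is correct and follows essentially the same route as the paper: count monomials supported on faces of the $(m-2)$-skeleton to get $H_{R/I_{m,n}}(t)=\sum_{s=0}^{m-1}\binom{n}{s}t^s/(1-t)^s$ (the paper does this by citing the Stanley--Reisner $f$-vector formula in Bruns--Herzog), collapse the numerator with the identity $\sum_{s=0}^{j}(-1)^{j-s}\binom{n}{s}\binom{m-1-s}{j-s}=\binom{n-m+j}{j}$, and then apply the duality $H_{\omega_S}(t)=(-1)^{m-1}H_S(t^{-1})$ to read off $\alpha_i=h_{m-1-i}=\binom{n-1-i}{m-1-i}$. The only difference is cosmetic: you re-derive the $f$-vector formula by hand and flirt with the resolution-based alternative, while the paper simply cites the relevant results from Bruns--Herzog.
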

\begin{proof}
Suppose $\Delta$ is a simplicial complex on the vertex set $V=\{v_1,\ldots,v_n\}$ such that $\{v_{i_1},\ldots,v_{i_m}\}\not\in \Delta$ for each $1\leq i_1<\ldots <i_m\leq n$. Then the Stanley-Reisner ring of the complex $\Delta$ is  the homogeneous $\sk$-algebra 
$$\sk[\Delta]=\sk[x_1,\ldots,x_n]/I_{m,n}$$
where $I_{n,m}$ is the ideal generated by all monomials of degree $m$.

Let $f_i$ denote the number of $i$-dimensional faces of $\Delta$. Then $f_{i-1}=\dbinom{n}{i}$ for $0\leq i\leq m-1$. By a known combinatorial identity, we get

$$\dbinom{n-m+j}{j}=\sum_{i=0}^{j}(-1)^{j-i}\dbinom{m-1-i}{j-i}\dbinom{n}{i}.$$ 
Then, by \cite[Lemma. 5.1.8]{BH}, we have $H_{R/I_{n,m}}(t)=\dfrac{\sum_{j=0}^{m-1}h_jt^j}{(1-t)^{m-1}}$, where $h_j=\dbinom{n-m+j}{j}$. Now one can see that
 $$H_{\omega_{R/I_{m,n}}}(t)=(-1)^{m-1}H_{R/I_{m,n}}(t^{-1})=\dfrac{\sum_{j=0}^{m-1}h_jt^{m-1-j}}{(1-t)^{m-1}}=\dfrac{\sum_{j=0}^{m-1}h_{m-1-j}t^{j}}{(1-t)^{m-1}}$$ by \cite[Corollary. 4.4.6]{BH}. Thus, for $\alpha_j=h_{m-1-j}$, we get $H_{\omega_{R/I_{m,n}}}(t)=\dfrac{\sum_{j=0}^{m-1}\alpha_j t^j}{(1-t)^{m-1}}$. 
\end{proof} 

Let us fix an $n$-tuple $(e_1, \ldots, e_n)$ of integers greater than $1$. Let $r_1, \ldots, r_n$ be defined as $r_i=e_1\ldots \hat{e}_i\ldots e_n$ where $\hat{e}_i$ denotes the missing term in the product, and $e=e_1\ldots e_n$. For $2\leq i\leq n$ we set $\deg(x_i)=r_i$ which makes $R$ an $\mathbb{N}^n$-graded ring.
Let $\tilde{H}_{M}(t)$ denote the Hilbert function of a module $M$ in this new grading. 

In the following remark, we give the Hilbert-Poincar\'{e} functions of $R/I_{2,n}$ and $R/L$.
 
\begin{remark}\label{Poin1}
Suppose $\deg(x_i)=r_i$ and $L_{2,n}=\langle x_i^{e_i}-x_n^{e_n}:1\leq i\leq n-1\rangle$, and $L=I_{2,n}+L_{2,n}$. Then $$\displaystyle{\tilde{H}_{R/I_{2,n}}(t)=1+\sum_{i=1}^n\dfrac{t^{r_i}}{1-t^{r_i}}}\;\; \text{and}\;\; \displaystyle{\tilde{H}_{R/L}(t)=1+\sum_{i=1}^n \dfrac{t^{r_i}-t^e}{1-t^{r_i}}}+t^e.$$
\end{remark}

\section{$\mathbb{N}$-Graded Embedding of A Canonical Module}\label{EMBD}

Throughout this section we fix $1\le m<n$ and the integers $\underline d =(d_1,\ldots, d_{m-1})$ satisfying $1<d_1<\ldots<d_{m-1}$.

 In this section, for each $\underline d$, an explicit embedding $\psi:=\psi(\underline d)$ of $\omega_{R/I_{m,n}}$ into $R/I_{m,n}$ is constructed in Theorem~\ref{inj}. We also prove that $\omega_{R/I_{m,n}}$ is identified with an $\mathbb{N}$-graded ideal of $R/I_{m,n}$ for each $1<d_1<\ldots<d_{m-1}$ where $d_i\in \mathbb{N}$. In order to do that, we order monomials  in graded lexicographic order and all initial ideals are taken with respect to that order. 

\begin{definition}
Let $I$ be an ideal of $R$ and $0\neq f\in R$. The initial ideal of $I$, denoted ${\rm in}(I)$, is defined as 
$${\rm in}(I)=\{{\rm in}(f)|f\in I\setminus \{0\}\}$$  
where ${\rm in}(f)$ is the largest monomial appearing in $f$.
\end{definition}

\begin{setup}\label{Setup}
Let $m-1\nmid {\rm char}(\sk)$. For $1< d_1<d_2<\ldots<d_{m-1}$, let $d=d_1+\ldots+d_{m-1}$. Consider $m\times n$ matrices $B$ and $D$ of the form $$
B=\begin{bmatrix*}
1 & 1 & \cdots  & 1\\
x_1^{d_1-1} & x_2^{d_{1}-1} & \cdots &x_n^{d_{1}-1} \\  
\vdots &\vdots&\ddots&\vdots\\       
  x_1^{d_{m-1}-1} & x_2^{d_{m-1}-1} & \cdots & x_n^{d_{m-1}-1}\\   
\end{bmatrix*},
\;\;
D=\begin{bmatrix*}
1 & 1 & \cdots  & 1\\
x_1^{d_1} & x_2^{d_{1}} & \cdots &x_n^{d_{1}} \\  
\vdots &\vdots&\ddots&\vdots\\       
  x_1^{d_{m-1}} & x_2^{d_{m-1}} & \cdots & x_n^{d_{m-1}}\\   
\end{bmatrix*}
$$

Let $\beta_{\Lambda}$ be an $m\times m$ minor of $B$ involving columns $\Lambda$ where $\Lambda\subset [n]$ and $|\Lambda|=m$. Let $J_{m,n}:=J_{m,n}(\underline d)=\langle\delta_{i_1,\ldots,i_m}|1\leq i_1<\ldots<i_m\leq n\rangle$ where $\delta_{i_1,\ldots,i_{m}}$ is an $m\times m$ minors of $D$ and $J:=J(\underline d)=I_{m,n}+J_{m,n}(\underline d)$. 
\end{setup} 

One can observe the relations between $m\times m$ minors of $B$ and $D$ as given in the remark below. 
\begin{remark}\label{minBC}
With notation in Setup \ref{Setup},  we see that $\delta_{\Lambda}=\beta_{\Lambda}\sum_{\tau\subset \Lambda,|\tau|=m-1}x_{\tau}$ in $R/I_{m,n}$. 
\end{remark} 

The following lemma is crucial in the proof of Theorem \ref{inj}.
\begin{lemma}\label{Minormap}
Assume Setup \ref{Setup}. Let $\sigma \subset [n]$, $|\sigma|=m-1$, and $f_{\sigma}:\omega_{R/I_{m,n}}\rightarrow R/I_{m,n}$ be the map defined in Theorem \ref{Mingens}. Let $\psi:\omega_{R/I_{m,n}}\rightarrow R/I_{m,n}$ be the map given by 

$$\psi=\sum\limits_{\substack{\Lambda\subset [n]\\|\Lambda|=m}}\;\;\sum\limits_{\substack{\sigma\subset \Lambda\\|\sigma|=m-1}}\beta_{\Lambda}f_{\sigma}.$$
Then $J/I_{m,n}\subset \im(\psi)$.
\end{lemma}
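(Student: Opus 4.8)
The plan is to compute $\psi$ applied to the relevant standard tableaux and recognize the output as the maximal minors $\delta_\Lambda$ of $D$ up to the obvious identifications modulo $I_{m,n}$. First I would recall the explicit description of $f_\sigma$ from Theorem~\ref{Mingens}: for $\sigma=\{j_1,\dots,j_{m-1}\}$ with $1<j_1<\dots<j_{m-1}$, the map $f_\sigma$ sends $[T_{[n]\setminus\sigma}]$ to $x_{j_1}\cdots x_{j_{m-1}}$ and kills the other basis tableaux, while for $\sigma=\{1,j_2,\dots,j_{m-1}\}$ the map $f_\sigma$ also picks up contributions from tableaux $[T_{[n]\setminus((\sigma\setminus\{1\})\cup\{l\})}]$. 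So the strategy is: fix $\Lambda\subset[n]$ with $|\Lambda|=m$, evaluate $\psi$ on each standard tableau $[T_{[n]\setminus\Gamma}]$ with $\Gamma\subset\Lambda$, $|\Gamma|=m-1$, and show that the resulting element of $R/I_{m,n}$ generates (together with $I_{m,n}$) the same ideal as $\delta_\Lambda$.

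The key computational step is the following. For $\Lambda\subset[n]$ with $|\Lambda|=m$, expanding $\beta_\Lambda$ (a Vandermonde-type $m\times m$ minor of $B$) along appropriate cofactors, and using that $f_\sigma$ is supported only on tableaux indexed by $(m-1)$-subsets of $\Lambda$, I expect to get that $\psi$ restricted to the span of $\{[T_{[n]\setminus\Gamma}]:\Gamma\subset\Lambda\}$ produces values equal to $\beta_\Lambda\cdot x_\tau$ for the various $(m-1)$-subsets $\tau\subset\Lambda$ (with the $1\in\tau$ case absorbing the shuffling corrections coming from the second type of generator $f_{1,j_2,\dots,j_{m-1}}$). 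Summing appropriately and invoking Remark~\ref{minBC}, which states $\delta_\Lambda=\beta_\Lambda\sum_{\tau\subset\Lambda,|\tau|=m-1}x_\tau$ in $R/I_{m,n}$, one concludes that $\delta_\Lambda$ lies in the image of $\psi$. Since $J_{m,n}$ is generated by the $\delta_\Lambda$ and $J=I_{m,n}+J_{m,n}$, and since $\im(\psi)\subset R/I_{m,n}$ already contains the image of $I_{m,n}$ (which is zero there), this gives $J/I_{m,n}\subset\im(\psi)$.

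More carefully, I would organize it as: (1) fix $\Lambda=\{\lambda_1<\dots<\lambda_m\}$; (2) for each $\Gamma=\Lambda\setminus\{\lambda_r\}$ (a standard tableau index once we adjoin the complement, after possibly using the shuffling relations when $1\notin\Gamma$), write out $\psi([T_{[n]\setminus\Gamma}])=\sum_{\Lambda'}\sum_{\sigma\subset\Lambda'}\beta_{\Lambda'}f_\sigma([T_{[n]\setminus\Gamma}])$ and observe that only $\Lambda'=\Lambda$ and certain $\sigma$ contribute; (3) collect the terms and match against the cofactor expansion of the determinant $\delta_\Lambda$ of $D$, whose $(i,\lambda_r)$ entry is $x_{\lambda_r}^{d_{i-1}}=x_{\lambda_r}\cdot x_{\lambda_r}^{d_{i-1}-1}$, so that $\delta_\Lambda$ naturally factors through the minors $\beta$ of $B$ times single variables; (4) handle the rows/columns involving the index $1$ separately, since the tableau $[T_{[n]\setminus\{1,\dots\}}]$ is non-standard and must be rewritten via the shuffling relations, which is exactly why the second family $f_{1,j_2,\dots,j_{m-1}}$ in Theorem~\ref{Mingens} was constructed with its extra support.

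The main obstacle I anticipate is bookkeeping the signs and the non-standard tableau $[T_{[n]\setminus\{1,j_2,\dots,j_{m-1}\}}]$: its expansion in the standard basis (the identity displayed in the proof of Theorem~\ref{Mingens}) introduces a signed sum over all $(m-1)$-subsets obtained by swapping $1$ for another index, and one must verify that after applying $\psi$ these corrections assemble precisely into the $1$-column cofactor of $\delta_\Lambda$ with the correct sign, rather than producing spurious terms. Controlling this — together with the hypothesis $m-1\nmid\operatorname{char}(\sk)$ from Setup~\ref{Setup}, which presumably guarantees that a certain multiplicity factor ($m-1$ copies of the same term arising from $\sum_{\sigma\subset\Lambda}$) does not vanish — is where the real work lies; everything else is cofactor expansion of a Vandermonde-like determinant.
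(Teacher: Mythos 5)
Your overall strategy coincides with the paper's: evaluate $\psi$ on the standard tableaux $[T_{[n]\setminus\Gamma}]$, use the support description of the maps $f_\sigma$ from Theorem~\ref{Mingens}, recognize the value as a nonzero scalar multiple of $\delta_\Lambda$ via Remark~\ref{minBC}, and conclude $J/I_{m,n}\subset\im(\psi)$ because $\im(\psi)$ is an ideal of $R/I_{m,n}$ and the $\delta_\Lambda$ generate $J$ modulo $I_{m,n}$. However, there is a concrete inaccuracy in your step (2), and it sits exactly where the real content of the proof lies. It is not true that ``only $\Lambda'=\Lambda$ contributes'' when you apply $\psi$ to $[T_{[n]\setminus\Gamma}]$: the maps $f_\sigma$ that are nonzero on this tableau are those with $\sigma$ an $(m-1)$-subset of $\{1\}\cup\Gamma$, and each such $f_\sigma$ appears in $\psi$ once for \emph{every} $m$-set $\Lambda'\supset\sigma$, so all minors $\beta_{\Lambda'}$ with $|\Lambda'\cap(\{1\}\cup\Gamma)|\geq m-1$ enter the value, not just $\beta_\Lambda$. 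The paper's proof is built around taming precisely these cross terms: modulo $I_{m,n}$ the product $\beta_{\Lambda'}x_\sigma$ does not depend on the choice of $\Lambda'\supset\sigma$ (every monomial of $\beta_{\Lambda'}$ involving the variable indexed by $\Lambda'\setminus\sigma$ is killed by $x_\sigma$), i.e.\ $\beta_{\Lambda_1}x_\sigma=\beta_{\Lambda_2}x_\sigma$ in $R/I_{m,n}$ whenever $\sigma\subset\Lambda_1\cap\Lambda_2$; this lets all contributions be rewritten as multiples of $\beta_\Lambda x_\sigma$, and then Remark~\ref{minBC} assembles them into a scalar multiple of $\delta_\Lambda$. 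Relatedly, your explanation of the scalar is off: it does not arise as ``$m-1$ copies from $\sum_{\sigma\subset\Lambda}$'' --- the inner sum produces each $x_\tau$, $\tau\subset\Lambda$, exactly once and by Remark~\ref{minBC} gives $\delta_\Lambda$ on the nose --- but from the collapsing of the outer sum over the foreign $\Lambda'$; that collapsed multiplicity is the scalar the paper records as $m-1$, and it is there that the hypothesis $m-1\nmid{\rm char}(\sk)$ of Setup~\ref{Setup} is used. Without the collapse identity your outline has no mechanism for controlling the terms $\beta_{\Lambda'}f_\sigma$ with $\Lambda'\neq\Lambda$, so the computation does not close as planned.

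A smaller point: your step (4) about re-expanding the non-standard tableau $[T_{[n]\setminus\{1,j_2,\ldots,j_{m-1}\}}]$ via the shuffling relations is not needed in this lemma. That bookkeeping was already absorbed into the definition (and well-definedness) of the second family $f_{1,j_2,\ldots,j_{m-1}}$ in Theorem~\ref{Mingens}; here one only evaluates $\psi$ on standard tableaux, and the extra support of those generators is what produces the terms $x_\sigma$ with $1\in\sigma$ in the expansion of $\delta_\Lambda$.
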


\begin{proof}
Let $1<j_1<\ldots <j_{m-1}\leq n$. For all $\Lambda_1,\Lambda_2\subset [n]$ and $|\Lambda_1|=|\Lambda_2|=m$, we consider $\sigma=\Lambda_1\cap\Lambda_2 $ with $|\sigma|=m-1$. If $\sigma\subset \{1,j_1,\ldots,j_{m-1}\}$, then we see that $$\beta_{\Lambda_1}x_{\sigma}=\beta_{\Lambda_1}f_{\sigma}([T_{[n]\setminus \{j_1,\ldots,j_{m-1}\}}])=\beta_{\Lambda_2}f_{\sigma}([T_{[n]\setminus \{j_1,\ldots,j_{m-1}\}}])=\beta_{\Lambda_2}x_{\sigma}$$ in $R/I_{m,n}$. Hence, $$\psi([T_{[n]\setminus\{j_1,\ldots,j_{m-1}\}}])=(m-1)\beta_{\Lambda}\sum\limits_{\substack{\tau\subset \Lambda\\ |\tau|=m-1}}f_{\tau}([T_{[n]\setminus \{j_1,\ldots,j_{m-1}\}}])$$ where $\{j_1,\ldots,j_{m-1}\}\subset \Lambda$. 
By Theorem \ref{Mingens}, $f_{\tau}([T_{[n]\setminus \{j_1,\ldots,j_{m-1}\}}])=x_{\tau}$ provided $\tau\subset \Lambda$ and $|\tau|=m-1$. Thus, we get $\psi([T_{[n]\setminus\{j_1,\ldots,j_{m-1}\}}])=(m-1)\delta_{\Lambda}$ by Remark \ref{minBC}. Hence, for every $\Lambda\subset [n]$, we have $\delta_{\Lambda}\in \im(\psi)$. This proves $J/I_{m,n}\subset \im(\psi)$. 
\end{proof}

We are now ready to state and prove  the main theorem in this paper.
\begin{theorem}\label{inj}
 With notation in Setup \ref{Setup}, let $\psi:\omega_{R/I_{m,n}}\rightarrow R/I_{m,n}$ be the map stated in Lemma \ref{Minormap}. Then $\psi$ is injective and $\im(\psi)=J/I_{m,n}$.
\end{theorem}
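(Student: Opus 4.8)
The plan is to combine Lemma~\ref{Minormap} with a Hilbert-function (length) count. Lemma~\ref{Minormap} already gives the inclusion $J/I_{m,n}\subseteq \im(\psi)$, so the two things that remain are: (1) the reverse inclusion $\im(\psi)\subseteq J/I_{m,n}$, and (2) injectivity of $\psi$. Since $\omega_{R/I_{m,n}}$ and $R/I_{m,n}$ both have Krull dimension $m-1$, the cleanest route to injectivity is a numerical one: compute the Hilbert--Poincar\'e series of $\omega_{R/I_{m,n}}$ (available from Lemma~\ref{Omega}) and of $J/I_{m,n}$, show they agree, and then deduce that a surjection $\omega_{R/I_{m,n}}\twoheadrightarrow J/I_{m,n}$ between modules with the same Hilbert series must be an isomorphism.

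First I would pin down $\im(\psi)$. From the formula $\psi=\sum_{\Lambda}\sum_{\sigma\subset\Lambda}\beta_\Lambda f_\sigma$ and the computation in the proof of Lemma~\ref{Minormap}, one sees that on the standard-tableau basis element $[T_{[n]\setminus\{j_1,\ldots,j_{m-1}\}}]$ the map $\psi$ returns $(m-1)\delta_\Lambda$ where $\Lambda=\{j_1,\ldots,j_{m-1},*\}$ ranges over all $m$-subsets containing $\{j_1,\dots,j_{m-1}\}$; more precisely $\psi$ on that basis vector lands in $\sum_\Lambda R\,\delta_\Lambda = J_{m,n}/I_{m,n}$. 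Because the $\mathcal{B}_{n-m}$ basis tableaux generate $\omega_{R/I_{m,n}}$ as an $R/I_{m,n}$-module (Remark~\ref{Tabl}, Remark~\ref{MinOmg}) and $\psi$ is $R/I_{m,n}$-linear, $\im(\psi)$ is generated by these values, hence $\im(\psi)\subseteq J/I_{m,n}$. Together with Lemma~\ref{Minormap} this yields $\im(\psi)=J/I_{m,n}$ (here the hypothesis $m-1\nmid\operatorname{char}(\sk)$ of Setup~\ref{Setup} is what lets the factor $m-1$ be ignored).

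Next, injectivity. I would argue that the induced surjection $\bar\psi:\omega_{R/I_{m,n}}\twoheadrightarrow J/I_{m,n}$ is an isomorphism by comparing Hilbert series. The Hilbert series of $\omega_{R/I_{m,n}}$ is given by Lemma~\ref{Omega} as $\sum_{i=0}^{m-1}\alpha_i t^i/(1-t)^{m-1}$ with $\alpha_i=\binom{n-i-1}{m-i-1}$. For $J/I_{m,n}$ I would use the short exact sequence $0\to J/I_{m,n}\to R/I_{m,n}\to R/J\to 0$, so that $H_{J/I_{m,n}}(t)=H_{R/I_{m,n}}(t)-H_{R/J}(t)$; since $R/J$ is the Artinian Gorenstein connected-sum-type ring (after a suitable choice of degrees / specialization — or one computes $H_{R/J}$ directly from the fact that, modulo $I_{m,n}$, the ideal $J_{m,n}$ is generated by the $\delta_\Lambda = \beta_\Lambda\sum_{\tau\subset\Lambda}x_\tau$), one can evaluate $H_{R/J}(t)$ and check the identity $H_{J/I_{m,n}}(t)=H_{\omega_{R/I_{m,n}}}(t)$ coefficient by coefficient. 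Once the Hilbert series coincide, $\ker(\bar\psi)$ is a graded module with $H_{\ker}(t)=0$, hence $\ker(\bar\psi)=0$, and $\psi$ is injective. (Alternatively, and perhaps more robustly, one can invoke that $\omega_{R/I_{m,n}}$ is a maximal Cohen--Macaulay, hence torsion-free, module over the reduced — generically Gorenstein — ring $R/I_{m,n}$: a nonzero $\psi$ between two MCM modules of the same rank whose cokernel is supported in lower dimension is automatically injective; then the Hilbert count is only needed to see the image is all of $J/I_{m,n}$ with the right multiplicity.)

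The main obstacle I expect is the Hilbert-series bookkeeping for $J/I_{m,n}$: one must correctly identify $R/J$ and its Hilbert function, being careful that the minors $\delta_\Lambda$ generating $J_{m,n}$ are not a regular sequence and that $I_{m,n}$ is already present, so one cannot just multiply Koszul factors. The safest way around this is to exploit the structure from Remark~\ref{minBC}, $\delta_\Lambda=\beta_\Lambda\sum_{\tau\subset\Lambda}x_\tau$ in $R/I_{m,n}$, which ties the computation back to the explicit free resolution $\mathbb{F}^{m,n}_\bullet$ of Proposition~\ref{Gal} and the presentation of $\Hom_R(\omega_{R/I_{m,n}},R/I_{m,n})$ in Corollary~\ref{pres}; that presentation is essentially dual to $\psi$, so matching ranks there gives the Hilbert identity with minimal extra computation. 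If the numerics refuse to be transparent, the fallback is the torsion-freeness argument above, which needs only that $\im(\psi)\ne 0$ (clear, since some $\delta_\Lambda\ne 0$) and dimension counting to conclude $\ker\psi=0$.
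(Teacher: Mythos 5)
Your reduction is set up correctly: Lemma~\ref{Minormap} gives $J/I_{m,n}\subseteq\im(\psi)$, your observation that $\psi$ sends each generator $[T_{[n]\setminus\{j_1,\ldots,j_{m-1}\}}]$ to $(m-1)\delta_\Lambda$ gives the reverse inclusion (this is even a bit more direct than the paper, which extracts $\im(\psi)=J/I_{m,n}$ only at the end from an equality of Hilbert series), and the exact sequence $0\to\ker(\psi)\to\omega_S(-d)\to S\to S/\im(\psi)\to 0$ reduces injectivity to the identity $H_{R/J}(t)=H_S(t)-t^{d}H_{\omega_S}(t)$. But that identity is precisely the content of the theorem, and you never prove it: you write that one "can evaluate $H_{R/J}(t)$ and check the identity coefficient by coefficient," and your own caveat (the $\delta_\Lambda$ are not a regular sequence, $I_{m,n}$ is already present) is exactly why no routine evaluation is available. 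Appeals to Remark~\ref{minBC} or to "matching ranks" in Corollary~\ref{pres} do not produce the inequality $H_{J/I_{m,n}}(t)\ge t^{d}H_{\omega_S}(t)$, which is the hard direction (the other direction is free, since $J/I_{m,n}$ is a quotient of $\omega_S(-d)$ once the image is identified). The paper's proof supplies this lower bound by an explicit construction you have no substitute for: it exhibits a monomial ideal $Q=I_{m,n}+\sum_{k}P^k_{m,n}\subseteq{\rm in}(J)$, decomposes $Q/I_{m,n}$ as a direct sum of shifted polynomial subrings $A_{i_1,\ldots,i_{m-k-1},k}(-d-k)$ to get $H_{Q/I_{m,n}}(t)=t^{d}H_{\omega_S}(t)$, and then squeezes $H_{S/\im(\psi)}\le H_{R/J}=H_{R/{\rm in}(J)}\le H_{R/Q}=H_S-t^dH_{\omega_S}\le H_{S/\im(\psi)}$, forcing equality everywhere and hence $\ker(\psi)=0$.

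Your fallback via torsion-freeness of the maximal Cohen--Macaulay module $\omega_S$ is a legitimately different route, but as stated it also has a gap. The principle you need is: $\ker(\psi)\ne 0$ would have an associated prime among the minimal primes $P_\Gamma=(x_i: i\notin\Gamma)$, $|\Gamma|=m-1$, of $S$, so it suffices that $\psi_{P_\Gamma}$ be injective for every such $\Gamma$; since $\ell((\omega_S)_{P_\Gamma})=\ell(S_{P_\Gamma})$, this follows if $({\rm coker}\,\psi)_{P_\Gamma}=0$ for every $\Gamma$, i.e.\ if $J$ is contained in no minimal prime of $I_{m,n}$. The condition "$\im(\psi)\ne 0$, since some $\delta_\Lambda\ne 0$" is not enough: a nonzero image only controls one component, and the needed "dimension counting" ($\dim R/J\le m-2$ on every component) is itself a claim requiring proof --- for instance by checking that for $\Lambda=\Gamma\cup\{l\}$ the minor $\delta_\Lambda$ reduces modulo $P_\Gamma$ to a nonzero generalized Vandermonde determinant in the variables $x_i$, $i\in\Gamma$. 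With that verification added, the fallback would yield a complete proof of injectivity independent of the paper's initial-ideal argument (with $\im(\psi)=J/I_{m,n}$ coming from your first paragraph); without it, neither of your two routes closes the argument.
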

\begin{proof}  

Let $S=R/I_{m,n}$. Consider a short exact sequence of the form 
\begin{equation}\label{ses2}
0\rightarrow\ker(\psi)\rightarrow \omega_{S}(-d)\xrightarrow{\psi} S \rightarrow S/\im(\psi)\rightarrow 0. 
\end{equation}
By Lemma \ref{Minormap}, we get $J/I_{m,n}\subset \im(\psi)$, and hence $H_{S/\im(\psi)}(t)\leq H_{R/J}(t)$. Set  $P_{m,n}^k$  as 
\begin{align*}
P_{m,n}^k &=\langle {\rm in}(x_{n-k+1}x_{n-k}\ldots x_{n-1}x_n\delta_{i_1,i_2,\ldots,i_{m-k-1},n-k,n-k+1,\ldots,n})|1\leq i_1<\ldots<i_{m-k-1}<n-k-1\rangle\\
&=\langle x_{i_1}^{d_{m-1}}\ldots x_{i_{m-k-1}}^{d_{k+1}}x_{n-k+1}^{d_{k}+1}\ldots x_{n}^{d_1+1}|1\leq i_1<i_2<\ldots<i_{m-k-1}<n-k-1\rangle.
\end{align*}  
Then $Q=I_{m,n}+\sum_{k=0}^{m-1}P^k_{m,n}$ is an ideal of $R$ such that $Q\subset {\rm in }(J)$. Futhermore, the fact that $H_{R/J}(t)=H_{R/{\rm in}(J)}(t)$ implies  
\begin{equation}\label{ineq}
H_{S/\im(\psi)}(t)\leq H_{R/J}(t)\leq H_{R/Q}(t).
\end{equation}
To prove the theorem, it is enough to see that
$H_{R/Q}(t)\leq H_{S/\im(\psi)}(t)$.

Let $A_{i_1,\ldots, i_{m-k-1},k}=\sk[x_{i_1},\ldots,x_{i_{m-k-1}},x_{n-k+1},\ldots,x_n]$ and let the $\sk$-linear maps $g_{i_1,\ldots,i_{m-k-1}}:A_{i_1,\ldots,i_{m-k-1},k}\rightarrow Q/I_{m,n}$ be defined as
\begin{align*}
g_{i_1,\ldots,i_{m-k-1},k}(1)&=x_{i_1}^{d_{m-1}}\ldots x_{i_{m-k-1}}^{d_{k+1}}x_{n-k+1}^{d_{k}+1}\ldots x_{n}^{d_1+1}. 
\end{align*}
By the universal property of coproduct, we have 
$$Q/I_{m,n}\simeq \bigoplus_{1\leq i_1<\ldots <i_{m-k-1}<n-k-1,k=0}^{m-1}A_{i_1,\ldots,i_{m-k-1},k}(-d-k).$$

Hence, $H_{Q/I_{m,n}}(t)=t^{d}\dfrac{\sum_{k=0}^{m-1}\alpha_k t^k}{(1-t)^{m-1}}$ where $\alpha_k=\binom{n-k-1}{m-k-1}$. Therefore, by Lemma \ref{Omega},  we get $H_{Q/I_{m,n}}(t)=t^d H_{\omega_S}(t)$. By Equation \ref{ses2}, one can see that $$H_{R/Q}(t)=H_{S}(t)-H_{Q/I_{m,n}}(t)\leq H_{S/\im(\psi)}(t).$$ Then, by Equation \ref{ineq}, we have $H_{S/\im(\psi)}(t)= H_{R/J}(t)$, hence $\im(\psi)=J/I_{m,n}$ and $Q={\rm in}(J)$. Moreover, we get $$H_{R/J}(t)=H_{S/\im(\psi)}(t)=H_{S}(t)-t^d H_{\omega_{S}}(t).$$
By Equation \ref{ses2}, $H_{\ker(\psi)}(t)=0$, hence $\ker(\psi)=0$. Thus, $\psi$ is injective.
\end{proof}

In the following corollary, we see that $\omega_{R/I_{m,n}}$ is identified with an $\mathbb{N}$-graded ideal of $R/I_{m,n}$ for each $1<d_1<\ldots<d_{m-1}$.
\begin{corollary}\label{omg}
Assume Setup \ref{Setup}. Then $\omega_{R/I_{m,n}}(-d)\simeq J/I_{m,n}$. 
\end{corollary}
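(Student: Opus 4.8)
The plan is to derive Corollary \ref{omg} as a direct formal consequence of Theorem \ref{inj} together with the degree bookkeeping already recorded in its proof. By Theorem \ref{inj}, the map $\psi\colon\omega_{S}(-d)\to S$ (where $S=R/I_{m,n}$) is injective with image $J/I_{m,n}$; since $\psi$ is a graded $S$-module homomorphism, it therefore restricts to a graded isomorphism of $S$-modules $\omega_{S}(-d)\xrightarrow{\;\sim\;}\im(\psi)=J/I_{m,n}$. So the entire content of the corollary is contained in the theorem, and the "proof" is just the observation that an injective graded homomorphism onto its image is an isomorphism onto that image.

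First I would restate precisely what Theorem \ref{inj} gives: the short exact sequence in Equation \eqref{ses2} has $\ker(\psi)=0$, so $0\to\omega_{S}(-d)\xrightarrow{\psi}S\to S/\im(\psi)\to 0$ is exact, and $\im(\psi)=J/I_{m,n}$. Then I would note that an injective map of modules induces an isomorphism onto its image, and that this isomorphism is degree-preserving because $\psi$ is (by construction $\psi$ is built from the $f_\sigma$ of Theorem \ref{Mingens} scaled by the degree-$(d-m+1)$ forms $\beta_\Lambda$, so after the shift by $d$ it lands in the right degrees — this is exactly why the source is written $\omega_S(-d)$ rather than $\omega_S$). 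Hence $\omega_{R/I_{m,n}}(-d)\cong J/I_{m,n}$ as graded $R/I_{m,n}$-modules, which is the assertion.

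If one wants to say a word more, I would add the sanity check that the Hilbert series match: by Lemma \ref{Omega} and the computation at the end of the proof of Theorem \ref{inj}, $H_{J/I_{m,n}}(t)=H_S(t)-H_{S/\im(\psi)}(t)=H_S(t)-H_{R/J}(t)=t^{d}H_{\omega_S}(t)=H_{\omega_S(-d)}(t)$, consistent with the isomorphism. This is not logically needed once injectivity is known, but it reassures the reader that the grading shift $-d$ is the correct one.

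There is essentially no obstacle here: the only thing to be careful about is the direction and magnitude of the degree shift, i.e.\ confirming that the grading convention makes $\psi$ a homomorphism $\omega_S(-d)\to S$ of degree $0$ (equivalently, that each generator $f_\sigma$, which sends the degree-$(n-m)$ tableau generator $[T_{[n]\setminus\sigma}]$ to the degree-$(m-1)$ monomial $x_\sigma$, composed with multiplication by $\beta_\Lambda$ of degree $\sum_i(d_i-1)=d-(m-1)$, raises degrees by exactly $d-(m-1)+(m-1)-(m-1)$ in the way recorded in Setup \ref{Setup} and Theorem \ref{inj}). Since Theorem \ref{inj} already writes the sequence with $\omega_S(-d)$, this has been settled upstream, so the corollary follows immediately.
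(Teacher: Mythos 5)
Your proof is correct and in substance the same as the paper's: both deduce the corollary immediately from Theorem \ref{inj}, which already supplies the injectivity of $\psi$ and the identification $\im(\psi)=J/I_{m,n}$, with the grading shift $(-d)$ settled there. The paper packages this as a commutative diagram of two short exact sequences with an appeal to the Snake Lemma, while you simply note that an injective graded homomorphism is a graded isomorphism onto its image; the content is identical, and your Hilbert-series sanity check is a harmless extra.
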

\begin{proof}
The following diagram is commutative:
\[
\xymatrixrowsep{1.8pc} \xymatrixcolsep{2.2pc}
\xymatrix{
0\ar@{->}[r]^{}&J/I_{m,n}\ar@{->}[r]^{}&R/I_{m,n}\ar@{->}[r]^{}&R/J\ar@{->}[r]^{}&0\\
0\ar@{->}[r]^{}&\omega_{R/I_{m,n}}(-d)\ar@{->}[r]^{\psi}\ar@{->}[u]^{\psi}& R/I_{m,n}\ar@{->}[u]^{\simeq}\ar@{->}[r]^{}&R/J\ar@{->}[r]^{}\ar@{->}[u]^{\simeq}&0\\
}
\]
By the Snake Lemma, $\omega_{R/I_{m,n}}(-d)\simeq J/I_{m,n}$.
\end{proof}

\begin{corollary}\label{Gor1}
There are infinitely many $\mathbb{N}$-graded embeddings of $\omega_{R/I_{m,n}}$ into $R/I_{m,n}$.
\end{corollary}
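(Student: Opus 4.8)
The plan is to leverage the machinery already assembled in Theorem~\ref{inj} and Corollary~\ref{omg}, which for each admissible tuple $\underline d=(d_1,\ldots,d_{m-1})$ with $1<d_1<\ldots<d_{m-1}$ produces an $\mathbb N$-graded embedding $\psi(\underline d):\omega_{R/I_{m,n}}(-d)\hookrightarrow R/I_{m,n}$ with image $J(\underline d)/I_{m,n}$, where $d=d_1+\cdots+d_{m-1}$. Since Setup~\ref{Setup} imposes no upper bound on the $d_i$ (only $1<d_1<\cdots<d_{m-1}$ and the mild characteristic hypothesis $m-1\nmid\operatorname{char}(\sk)$, which is fixed throughout the section), there are already infinitely many such tuples. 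First I would observe that the set of tuples $\underline d$ is infinite — e.g.\ the family $(2,3,\ldots,m-1,N)$ for $N>m-1$ — so Theorem~\ref{inj} hands us an infinite family of embeddings $\{\psi(\underline d)\}$.

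The only real content is to check these embeddings are genuinely distinct, i.e.\ that the family is not secretly finite because different tuples give the same map or differ merely by an automorphism in a way one wants to quotient out. The cleanest way I would argue this is via the degree shift: the embedding $\psi(\underline d)$ has source $\omega_{R/I_{m,n}}(-d)$, so it is a graded map raising degrees by $d=d_1+\cdots+d_{m-1}$, and equivalently its image $J(\underline d)/I_{m,n}$ is an ideal whose minimal generators (the maximal minors $\delta_{i_1,\ldots,i_m}$ of the Vandermonde-like matrix $D$, by Corollary~\ref{omg} together with Remark~\ref{minBC}) all live in degree $d$. Since $d$ takes infinitely many distinct values as $\underline d$ ranges over the admissible tuples (already $d\to\infty$ along the family above), the images $J(\underline d)/I_{m,n}$ are pairwise distinct ideals of $R/I_{m,n}$, hence the embeddings are pairwise distinct. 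I would phrase the conclusion as: the assignment $\underline d\mapsto \psi(\underline d)$ yields infinitely many embeddings, distinguished for instance by the total degree $d$ of the generators of the image ideal.

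The step I expect to require the most care is making precise what "infinitely many embeddings" should mean and ensuring the chosen invariant actually separates them; I would pin it down by counting the distinct ideals $J(\underline d)/I_{m,n}\subset R/I_{m,n}$ arising as images, using the degree of their generators as the separating invariant, which sidesteps any subtlety about composing with graded automorphisms of $\omega_{R/I_{m,n}}$ or of $R/I_{m,n}$. Everything else is immediate from the results already proved: Theorem~\ref{inj} gives injectivity and identifies the image, Corollary~\ref{omg} records the graded isomorphism $\omega_{R/I_{m,n}}(-d)\simeq J(\underline d)/I_{m,n}$, and the observation that the index set of admissible $\underline d$ is infinite finishes the argument.
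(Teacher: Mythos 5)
Your proposal is correct and follows exactly the route the paper intends: Corollary~\ref{Gor1} is stated as an immediate consequence of Theorem~\ref{inj}, obtained by varying the tuple $\underline d=(d_1,\ldots,d_{m-1})$ over the infinitely many admissible choices. Your extra care in separating the resulting embeddings by the degree shift $d=d_1+\cdots+d_{m-1}$ (equivalently, the initial degree of the image ideal $J(\underline d)/I_{m,n}\simeq\omega_{R/I_{m,n}}(-d)$) is a welcome precision that the paper leaves implicit, but it is the same argument.
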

As a consequence of Theorem~\ref{inj}, each specific embedding of canonical module $\omega_{R/I_{m,n}}$ to $R/I_{m,n}$ produces a Gorenstein ring.

\begin{proposition}
Let $J=I_{m,n}+J_{m,n}$ and $d=d_1+\ldots+d_{m-1}$. Then $R/J$ is a Gorenstein ring with $\dim(R/J)=m-2$.    
\end{proposition}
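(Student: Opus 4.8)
The plan is to deduce this from Theorem~\ref{inj} and Corollary~\ref{omg} together with standard facts about canonical modules. First I would observe that, by Corollary~\ref{omg}, we have an isomorphism of $R/I_{m,n}$-modules $\omega_{R/I_{m,n}}(-d)\simeq J/I_{m,n}$. Writing $S=R/I_{m,n}$, the short exact sequence
\[
0\longrightarrow J/I_{m,n}\longrightarrow S\longrightarrow R/J\longrightarrow 0
\]
exhibits $R/J$ as $S/\im(\psi)$, where $\psi$ is the embedding of Theorem~\ref{inj}. So the structural question is: why does embedding a (twisted) canonical module of a Cohen--Macaulay ring $S$ as an ideal produce a Gorenstein quotient whose dimension drops by one?

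The key step is to invoke the general principle (see, e.g., \cite[Theorem 3.3.7 and its proof, or Proposition 3.3.18 and 3.1.19]{BH}): if $S$ is Cohen--Macaulay with canonical module $\omega_S$ and $\omega_S\simeq \mathfrak a$ is identified with an ideal of $S$ that is nonzero and of height one (equivalently, $S/\mathfrak a$ has dimension $\dim S-1$), then $S/\mathfrak a$ is a Gorenstein ring, with $\omega_{S/\mathfrak a}\simeq S/\mathfrak a$. The twist by $(-d)$ is immaterial here since it only shifts the grading. Thus I would carry out the argument in two steps: (1) verify that $J/I_{m,n}$ is a height-one (unmixed, or at least with the right dimension) ideal of $S$, so that $\dim(R/J)=\dim(R/I_{m,n})-1=(m-1)-1=m-2$; and (2) quote the cited result to conclude $R/J$ is Gorenstein. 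For step (1) the cleanest route is to use the Hilbert series computation already obtained inside the proof of Theorem~\ref{inj}, namely $H_{R/J}(t)=H_S(t)-t^dH_{\omega_S}(t)$ together with the explicit forms in Lemma~\ref{Omega}: since $H_S(t)$ and $H_{\omega_S}(t)$ have the same denominator $(1-t)^{m-1}$ and the numerator of $H_{R/J}$ is $\sum_j h_j t^j - \sum_j h_{m-1-j}t^{j+d}$, one checks its order of vanishing of $H_{R/J}(t)$ at $t=1$ is exactly $m-2$, which pins down $\dim(R/J)=m-2$.

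Alternatively, and perhaps more transparently, for step (1) one can argue directly: $S=R/I_{m,n}$ is Cohen--Macaulay of dimension $m-1$ and is generically Gorenstein (indeed reduced, being a Stanley--Reisner ring), so a canonical ideal has height one; hence $S/(J/I_{m,n})=R/J$ has dimension $m-2$. Since $R/J$ is a quotient of the Cohen--Macaulay ring $S$ by an ideal isomorphic to $\omega_S$, the standard argument — $\operatorname{Ext}^1_S(S/\mathfrak a, \omega_S)\simeq \operatorname{Hom}_S(S/\mathfrak a, S/\mathfrak a)\simeq S/\mathfrak a$ realizing $\omega_{S/\mathfrak a}$, and $S/\mathfrak a$ being Cohen--Macaulay of dimension $\dim S - 1$ because $\mathfrak a$ contains a nonzerodivisor — gives that $R/J$ is Cohen--Macaulay with $\omega_{R/J}\simeq R/J$, i.e. Gorenstein.

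The main obstacle I anticipate is the technical care in step (1): one must confirm that $J/I_{m,n}$ actually contains a nonzerodivisor on $S$ (equivalently is not contained in any minimal prime of $S$), so that the dimension genuinely drops to $m-2$ rather than staying at $m-1$ or dropping further. This is where the concrete description of $J_{m,n}$ via the maximal minors $\delta_{i_1,\dots,i_m}$ of the Vandermonde-like matrix $D$, and Remark~\ref{minBC} expressing $\delta_\Lambda = \beta_\Lambda\sum_{\tau\subset\Lambda}x_\tau$ in $S$, becomes essential: the minimal primes of $S=R/I_{m,n}$ are the $\mathfrak p_\sigma = (x_i : i\notin\sigma)$ for $|\sigma|=m-1$, and one checks that for each such $\sigma$ there is a minor $\delta_\Lambda$ with $\sigma\subset\Lambda$ whose image in $S/\mathfrak p_\sigma$ is nonzero (a nonzero Vandermonde-type polynomial in the surviving variables, using $m-1\nmid\operatorname{char}(\sk)$ from Setup~\ref{Setup}). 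Once that is in hand, everything else is a direct appeal to \cite{BH}.
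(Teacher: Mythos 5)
Your argument is correct, but it follows a genuinely different route from the paper's. The paper proves the proposition homologically: it takes the mapping cone of the comparison map of free resolutions covering $\psi$ (using Proposition \ref{Gal} and Remark \ref{MinOmg}) to get a minimal free $R$-resolution of $R/J$ with $\pdim(R/J)=n-m+2$, then uses the fact that $J/I_{m,n}\simeq\omega_{R/I_{m,n}}(-d)$ contains a nonzerodivisor to force $\dim(R/J)\le m-2$, the graded Auslander--Buchsbaum formula to get Cohen--Macaulayness, and finally the symmetry of the Betti numbers of the cone (so that the last Betti number is $1$) to conclude Gorensteinness. You instead quote the general canonical-ideal criterion \cite[3.3.18]{BH}: since $S=R/I_{m,n}$ is Cohen--Macaulay and reduced (hence generically Gorenstein) and Corollary \ref{omg} identifies $\omega_S$, up to the harmless twist, with the proper ideal $J/I_{m,n}$, the quotient $S/(J/I_{m,n})=R/J$ is Gorenstein of dimension $\dim S-1=m-2$; note that with this citation your ``step (1)'' is essentially automatic (a proper canonical ideal has height one), so your explicit check on the minimal primes $\mathfrak{p}_\sigma$ via the minors $\delta_\Lambda$ --- which, incidentally, does not need the characteristic hypothesis, since a generalized Vandermonde determinant in independent variables is a signed sum of distinct monomials --- and the Hilbert-series pole-order computation are belt-and-suspenders rather than necessities. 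What each approach buys: yours is shorter and more conceptual, avoiding the minimality of the cone and the Betti-number symmetry, at the cost of invoking the graded analogue of a result \cite{BH} states locally (routine, and your $\operatorname{Ext}^1_S(S/\mathfrak{a},\omega_S)$ sketch is exactly its proof); the paper's mapping-cone argument is what actually produces the explicit minimal free resolution of $R/J$, which is the output needed later for the connected-sum application in Section \ref{sec6}, so the extra work there is not wasted.
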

\begin{proof}
Let $\psi:\omega_{R/I_{m,n}}\rightarrow R/I_{m,n}$ be the map in Theorem \ref{inj}. Then ${\rm Cone}(\psi)$ is a minimal free resolution of $R/J$ with ${\rm Cone}(\psi)_{i}=(G_{n-m-i+1}^{m,n})\oplus F_{i}^{m,n}$, where $\mathbb{F}^{m,n}_{\bullet}$ and $\mathbb{G}^{m,n}_{\bullet}$ are minimal free resolutions, given in Proposition \ref{Gal} and Remark \ref{MinOmg}, of $R/I_{m,n}$ and $\omega_{R/I_{m,n}}$, respectively. Then we have $\pdim(R/J)=n-m+2$.

By Corollary \ref{omg}, we have $\omega_{R/I_{m,n}}(-d)\simeq J/I_{m,n}$, and hence $J/I_{m,n}$ is an ideal with finite resolution. Now note that $J/I_{m,n}$ contains a $R$-regular element by \cite[Corollary 1.4.7]{BH}. Since $\dim(R/I_{m,n})=m-1$,  we have $\dim(R/J)\leq m-2$. By the graded Auslander-Buchsbaum formula, we get $\depth(R/I_{m,n})=m-2$. Therefore $R/J$ is Cohen-Macaulay. Proposition \ref{Gal} implies that $\beta_i(R/I_{m,n})=\binom{n}{m+i}\binom{m+i-1}{i}$, and hence $\beta_{i}(R/J)=\beta_{n-m+2-i}(R/J)$. This proves that $R/J$ is Gorenstein.   
\end{proof}

\section{$\mathbb{N}^n$-Graded Embedding and Connected Sums}\label{sec6}

In this section we specialize to $m=2$. In this situation we define even more embeddings of $\omega_{R/I_{2,n}}$ in $R/I_{2,n}$ and all of these embeddings are even $\mathbb{N}^n$-graded. These embeddings are closely related to connected sums of several copies of certain Artinian  $\sk$-algebras. 

Throughout the rest of the section we fix an $n$-tuple $(e_1,\ldots ,e_n)$ of  integers bigger than $1$.
Let  $e=e_1\ldots e_n$ and  $r_i=e_1\ldots \hat{e}_i\ldots e_n$ where $\hat{e}_i$ denotes the missing term in the product. For $2\leq i\leq n$ we set $\deg(x_i)=r_i$
which makes $R$ an $\mathbb{N}^n$-graded ring. In this setup we have the following.

\begin{theorem}\label{m=2}
The map $\psi:\omega_{R/I_{2,n}}\rightarrow R/I_{2,n}$  defined by $$\psi([T_{[n]\setminus \{i\}}])=x_i^{e_i}-x_1^{e_1}$$
is an $\mathbb{N}^n$-graded embedding. 
\end{theorem}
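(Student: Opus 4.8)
The plan is to follow the same strategy as in Theorem~\ref{inj}, using the explicit generators $f_{\{i\}}$ from Theorem~\ref{Mingens} and a Hilbert-function comparison, but now in the finer $\mathbb{N}^n$-grading where $\deg(x_i)=r_i$. First I would verify that $\psi$ is a well-defined $R/I_{2,n}$-module homomorphism: since $\psi([T_{[n]\setminus\{i\}}]) = x_i^{e_i}-x_1^{e_1}$, this is exactly the combination $f_{\{i\}} - f_{\{1\}}$ of the generators of $\Hom_R(\omega_{R/I_{2,n}},R/I_{2,n})$ from Theorem~\ref{Mingens} (evaluated via the $m=2$ description in Example~\ref{ex1}), so well-definedness is automatic; alternatively one checks directly that $\psi$ satisfies the relations of the presentation matrix $P^T$ of $\omega_{R/I_{2,n}}$, i.e.\ that the columns of the differential $\partial_{n-2}^{2,n}$ have the right vanishing pattern. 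I would also note that $\psi$ is homogeneous of degree $e = e_1r_1 = \cdots = e_nr_n$ in the $\mathbb{N}^n$-grading, since $\deg(x_i^{e_i}) = e_i r_i = e$ for every $i$.

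Next I would identify the image of $\psi$. By construction $\im(\psi)$ is generated over $R/I_{2,n}$ by the elements $x_i^{e_i}-x_1^{e_1}$; since $x_ix_j = 0$ in $R/I_{2,n}$ for $i\neq j$, multiplying $x_j^{e_j}-x_1^{e_1}$ by powers of $x_i$ kills all but the $x_1$-term when $i\ne 1,j$, so a short computation shows $\im(\psi) = L_{2,n}/I_{2,n} = (J/I_{2,n})$ where $J = I_{2,n} + L_{2,n}$ and $L_{2,n}=\langle x_i^{e_i}-x_n^{e_n}\rangle$ (up to the change of base point from $x_1$ to $x_n$, which gives the same ideal). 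Thus $R/J = S_1\#_{\sk}\cdots\#_{\sk}S_n$ by Definition~\ref{CSdef}, which is the connection with connected sums promised in the section introduction. The containment $J/I_{2,n}\subseteq\im(\psi)$ is the analogue of Lemma~\ref{Minormap}.

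For injectivity, I would run the Hilbert-series argument from Theorem~\ref{inj} in the $\mathbb{N}^n$-grading. Consider the four-term exact sequence $0\to\ker(\psi)\to\omega_S(-e)\xrightarrow{\psi} S\to S/\im(\psi)\to 0$ with $S=R/I_{2,n}$, which gives $\tilde H_{\ker(\psi)}(t) = \tilde H_{S/\im(\psi)}(t) - \tilde H_S(t) + t^e\tilde H_{\omega_S}(t)$. From Remark~\ref{Poin1} we have $\tilde H_{S}(t) = 1 + \sum_i \frac{t^{r_i}}{1-t^{r_i}}$ and $\tilde H_{R/L}(t) = 1 + \sum_i\frac{t^{r_i}-t^e}{1-t^{r_i}} + t^e$; since $\im(\psi)\supseteq J/I_{2,n}$ we get $\tilde H_{S/\im(\psi)}(t)\le \tilde H_{R/J}(t) = \tilde H_{R/L}(t)$. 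It then remains to compute $t^e\tilde H_{\omega_S}(t)$ in the new grading and check that $\tilde H_{R/L}(t) - \tilde H_S(t) + t^e\tilde H_{\omega_S}(t) \le 0$ coefficientwise; since $\tilde H_{\ker(\psi)}(t)\ge 0$ always, this forces $\tilde H_{\ker(\psi)}(t) = 0$, hence $\ker(\psi)=0$ and $\psi$ is injective (and simultaneously $\im(\psi)=J/I_{2,n}$ exactly). The standard-graded version of $\tilde H_{\omega_S}$ is Lemma~\ref{Omega} with $m=2$, namely $H_{\omega_S}(t) = \frac{(n-1)t + 1}{1-t}$; I would lift this to the $\mathbb{N}^n$-grading by tracking the degrees of the canonical generators, obtaining $t^e\tilde H_{\omega_S}(t) = t^e + \sum_i \frac{t^e}{1-t^{r_i}}$ (the $n$ ``Vandermonde-minor'' type generators each contributing $\frac{t^e}{1-t^{r_i}}$, plus a socle-type term $t^e$), so that the desired cancellation is exactly the identity $\tilde H_S = \tilde H_{R/L} + t^e\tilde H_{\omega_S} - (\text{something}\ge 0)$ collapsing to equality. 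The main obstacle I anticipate is precisely this last bookkeeping: correctly pinning down the $\mathbb{N}^n$-graded Hilbert series of $\omega_{R/I_{2,n}}$ (equivalently, the multidegrees of the free modules in the dual complex $\mathbb{G}_\bullet^{2,n}$ under the nonstandard grading), since the standard-graded Lemma~\ref{Omega} only records total degree; once the multigraded series of $\omega_S$ is in hand, the inequality becomes the term-by-term identity forced by Remark~\ref{Poin1} and injectivity follows.
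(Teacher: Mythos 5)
Your overall route is the paper's route: realize $\psi$ through the generators $f_{\{i\}}$ of $\Hom_R(\omega_{R/I_{2,n}},R/I_{2,n})$, identify the image with $L/I_{2,n}$, and kill the kernel by a Hilbert-series computation in the weighted grading using the four-term exact sequence. But the one step you defer as ``bookkeeping'' --- the weighted Hilbert series of $\omega_{R/I_{2,n}}$ --- is exactly where your proposal as written breaks, and the formula you guess is wrong. From Lemma \ref{Omega} with $m=2$ the standard-graded series is $H_{\omega_S}(t)=\frac{(n-1)+t}{1-t}$ (you reversed the numerator), and in the weighted grading the correct statement, which the paper gets from \cite[Corollary 4.4.6]{BH} applied in this grading, is $\tilde H_{\omega_S}(t)=-\tilde H_S(t^{-1})=\sum_{i=1}^n\frac{1}{1-t^{r_i}}-1$, so $t^e\tilde H_{\omega_S}(t)=\sum_i\frac{t^e}{1-t^{r_i}}-t^e$. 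Your proposed $t^e\tilde H_{\omega_S}(t)=t^e+\sum_i\frac{t^e}{1-t^{r_i}}$ is off by $2t^e$: with it, Remark \ref{Poin1} gives $\tilde H_{R/L}(t)-\tilde H_S(t)+t^e\tilde H_{\omega_S}(t)=2t^e$, so your claimed coefficientwise inequality $\le 0$ is false and the exact sequence would yield $\tilde H_{\ker(\psi)}=2t^e\neq 0$, i.e.\ the argument fails rather than closes. The missing idea is precisely the duality $\tilde H_{\omega_S}(t)=(-1)^{\dim S}\tilde H_S(t^{-1})$ in the nonstandard grading; once you use it, $\tilde H_S(t)-\tilde H_{R/L}(t)=t^e\tilde H_{\omega_S}(t)$ exactly, and since you already know $\im(\psi)=L/I_{2,n}$ on the nose (not just $\supseteq$), no inequality gymnastics are needed --- this is how the paper concludes.

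A smaller but real slip: $\psi$ is \emph{not} ``exactly the combination $f_{\{i\}}-f_{\{1\}}$'' --- that map sends $[T_{[n]\setminus\{i\}}]$ to $x_i-x_1$, and degree reasons alone rule out your identification. Well-definedness needs $\psi$ to be an $R$-linear combination of the generators with nonunit coefficients, e.g.\ $\psi=\sum_{i\ne 1}\alpha_{1i}f_{\{i\}}-x_1^{e_1-1}f_{\{1\}}$ with $\alpha_{1i}=x_i^{e_i-1}-x_1^{e_1-1}$ the $2\times 2$ minors of the matrix $A$ of the paper (using $x_1x_i=0$ in $R/I_{2,n}$); your fallback of checking the relations of $P^T$ directly would also work, so this is fixable, but as stated the justification is incorrect.
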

\begin{proof}
By Theorem \ref{Mingens}, $\{f_{\{i\}}:\{i\}\subset [n]\}$ is a generating set of $\Hom_R(\omega_{R/I_{2,n}},R/I_{2,n})$ where $f_{\{i\}}([T_{[n]\setminus \{i\}}])=x_i$ and $f_{\{1\}}([T_{[n]\setminus\{i\}}])=x_1$ for $i\neq 1$. Let $A$ be a $2\times n$ matrix of the form $$A=\begin{bmatrix*}
1 & 1 & \cdots  & 1\\
x_1^{e_1-1} & x_2^{e_{2}-1} & \cdots &x_n^{e_{n}-1}   
\end{bmatrix*}$$
and $\alpha_{1i}=x_i^{e_i-1}-x_1^{e_1-1}$ be a $2\times 2$ minor of $A$ for each $i$. Then, for $i\neq 1$, $$\psi([T_{[n]\setminus \{i\}}])=\alpha_{1i}(f_{\{i\}}-f_{\{1\}})([T_{[n]\setminus \{i\}}])=x_i^{e_i}-x_1^{e_1}.$$
Now consider ideals of the form $L_{2,n}:=L_{2,n}(e_1, \ldots, e_n)=\langle x_i^{e_i}-x_1^{e_1}:2\leq i\leq n\rangle$ and $L=I_{2,n}+L_{2,n}$. There is a short exact sequence of the form
\begin{equation}\label{Ses1}
0\rightarrow\ker(\psi)\rightarrow \omega_{R/I_{2,n}}(-e)\xrightarrow{\psi}R/I_{2,n}\rightarrow R/L\rightarrow 0.
\end{equation}

Next we show that $\tilde{H}_{\ker(\psi)}(t)=0$. By Remark \ref{Poin1}, $$\displaystyle{\tilde{H}_{R/I_{2,n}}(t)-\tilde{H}_{R/J}(t)=-t^e\Big[1+\sum_{i=1}^n\dfrac{1}{t^{r_i}-1} \Big]=-t^e \tilde{H}_{R/I_{2,n}}(t^{-1})}.$$
By \cite[Corollary. 4.4.6]{BH}, we get $\tilde{H}_{\omega_{I_{2,n}}}(t)=-\tilde{H}_{R/I_{2,n}}(t^{-1})$,  hence $$\displaystyle{\tilde{H}_{R/I_{2,n}}(t)-\tilde{H}_{R/L}(t)=t^e \tilde{H}_{\omega_{I_{2,n}}}(t)}.$$ The short exact sequence in (\ref{Ses1}) implies $\tilde{H}_{\ker(\psi)}(t)=0$. Therefore $\ker(\psi)=0$. This proves $\psi$ is injective. 
\end{proof}
As an immediate consequence of Theorem \ref{m=2},  $\omega_{R/I_{m,n}}$ is identified with an $\mathbb{N}^n$-graded ideal of $R/I_{m,n}$ for a specific embedding.
\begin{corollary}\label{m=2Omega}
With notation as in Theorem \ref{m=2}, let $L_{2,n}=\langle x_i^{e_i}-x_1^{e_1}:2\leq i\leq n\rangle$ and $L=I_{2,n}+L_{2,n}$. Then $\omega_{R/I_{2,n}}(-e)\simeq L/I_{2,n}$. 
\end{corollary}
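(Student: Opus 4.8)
The plan is to obtain this as a direct consequence of Theorem~\ref{m=2}, in exactly the spirit in which Corollary~\ref{omg} was deduced from Theorem~\ref{inj}. The starting point is the short exact sequence~(\ref{Ses1})
$$0\rightarrow\ker(\psi)\rightarrow \omega_{R/I_{2,n}}(-e)\xrightarrow{\psi}R/I_{2,n}\rightarrow R/L\rightarrow 0,$$
together with the fact, established in the proof of Theorem~\ref{m=2}, that $\ker(\psi)=0$. Once $\psi$ is known to be injective it restricts to an isomorphism of $R/I_{2,n}$-modules $\omega_{R/I_{2,n}}(-e)\xrightarrow{\ \sim\ }\im(\psi)$, so the only thing left to verify is the identification $\im(\psi)=L/I_{2,n}$.

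For that step I would argue as follows. By Remark~\ref{MinOmg} the dual complex $\mathbb{G}^{2,n}_\bullet=\Hom_R(\mathbb{F}^{2,n}_\bullet,R)$ is a minimal free resolution of $\omega_{R/I_{2,n}}$, so $\omega_{R/I_{2,n}}$ is generated by the $\mathrm{rank}(F^{2,n}_{n-2})=n-1$ dual generators indexed by the standard tableaux $[T_{[n]\setminus\{i\}}]$, $i=2,\ldots,n$ (see Remark~\ref{Tabl}). Applying $\psi$, the submodule $\im(\psi)\subset R/I_{2,n}$ is generated by the elements $\psi([T_{[n]\setminus\{i\}}])=x_i^{e_i}-x_1^{e_1}$, $i=2,\ldots,n$, which are precisely the images in $R/I_{2,n}$ of the generators of the ideal $L_{2,n}$. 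Hence $\im(\psi)=(I_{2,n}+L_{2,n})/I_{2,n}=L/I_{2,n}$, and combining with the isomorphism above gives $\omega_{R/I_{2,n}}(-e)\simeq L/I_{2,n}$.

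Alternatively, and perhaps more cleanly, one can package this through the Snake Lemma applied to the morphism of short exact sequences whose bottom row is~(\ref{Ses1}) with the (zero) kernel removed and whose top row is $0\to L/I_{2,n}\to R/I_{2,n}\to R/L\to 0$, the two right-hand vertical maps being the identity; the computation in the previous paragraph is exactly what makes the left square commute, and the resulting left-hand vertical map is the desired isomorphism. Since no Hilbert-series or resolution input is needed beyond what Theorem~\ref{m=2} already provides, I do not anticipate any genuine obstacle here; the only point that deserves (minor) attention is to read off the generators of $\im(\psi)$ correctly from the standard-tableaux basis of $F^{2,n}_{n-2}$, and to note that the isomorphism is $\mathbb{N}^n$-graded because $\psi$ is.
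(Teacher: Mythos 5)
Your proposal is correct and follows essentially the same route as the paper, which also deduces the corollary from Theorem \ref{m=2} by comparing the two short exact sequences in a commutative diagram and applying the Snake Lemma. Your explicit verification that $\im(\psi)=L/I_{2,n}$, by evaluating $\psi$ on the standard-tableaux generators $[T_{[n]\setminus\{i\}}]$, $2\le i\le n$, of $\omega_{R/I_{2,n}}$, is exactly the point the paper leaves implicit (it is what makes the left square of its diagram commute and the sequence (\ref{Ses1}) exact), so this is a welcome but not divergent addition.
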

\begin{proof}
By Theorem \ref{m=2}, the following commutative diagram is 
\[
\xymatrixrowsep{1.8pc} \xymatrixcolsep{2.2pc}
\xymatrix{
0\ar@{->}[r]^{}&L/I_{2,n}\ar@{->}[r]^{}&R/I_{2,n}\ar@{->}[r]^{}&R/L\ar@{->}[r]^{}&0\\
0\ar@{->}[r]^{}&\omega_{R/I_{2,n}}(-e)\ar@{->}[r]^{\psi}\ar@{->}[u]^{\psi}& R/I_{2,n}\ar@{->}[u]^{\simeq}\ar@{->}[r]^{}&R/L\ar@{->}[r]^{}\ar@{->}[u]^{\simeq}&0\\
}
\]
By the Snake Lemma, $\omega_{R/I_{2,n}}(-e)\simeq L/I_{2,n}$.
\end{proof}

Now we state an application of Theorem \ref{m=2} which gives a minimal free resolution of a connected sum of Artinian rings of embedding dimension one.
\begin{corollary}\label{Connectedsum}
Let $R_i=\sk[x_i]/\langle x_i^{e_i+1}\rangle$, $L_{2,n}=\langle x_i^{e_i}-x_1^{e_1}:2\leq i\leq n\rangle$, and $L=I_{2,n}+L_{2,n}$. Suppose $\psi:\omega_{R/I_{2,n}}\rightarrow R/I_{2,n}$ satisfies Theorem \ref{m=2}.  Then $R/L$ is a Gorenstein Artin ring such that $R/L\simeq R_1\#_{\sk}\ldots\#_{\sk}R_n$. Furthermore, the mapping cone ${\rm Cone}(\psi)$ is a minimal free $R$-resolution of $R/L$.
\end{corollary}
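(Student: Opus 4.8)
\textbf{Proof proposal for Corollary \ref{Connectedsum}.}

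The plan is to treat the three assertions in turn: that $R/L$ is Gorenstein Artinian, that it is the iterated connected sum $R_1\#_{\sk}\cdots\#_{\sk}R_n$, and finally that $\mathrm{Cone}(\psi)$ is a minimal free resolution of it. For the first, I would observe that with $m=2$ the ambient ring $R/I_{2,n}$ has dimension $m-1=1$, and by Corollary \ref{m=2Omega} the ideal $L/I_{2,n}\simeq\omega_{R/I_{2,n}}(-e)$ is isomorphic to the canonical module, which (since $R/I_{2,n}$ is reduced, hence generically Gorenstein) contains an $R/I_{2,n}$-regular element. Cutting down by such an element forces $\dim(R/L)\le 0$, so $R/L$ is Artinian; combined with $\depth$ considerations via the graded Auslander--Buchsbaum formula (exactly as in the $\mathbb{N}$-graded case treated in the Proposition preceding Section \ref{sec6}) this gives Cohen--Macaulayness, and the symmetry $\beta_i(R/L)=\beta_{n-i}(R/L)$ coming from the mapping cone identifies it as Gorenstein. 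Alternatively, and more cheaply, I would just appeal to Remark \ref{CSrmk}: once the isomorphism $R/L\simeq R_1\#_{\sk}\cdots\#_{\sk}R_n$ is established, Gorensteinness is automatic.

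For the identification with the connected sum, the key is to match $L$ with the ideal $J$ of Definition \ref{CSdef}. Here $R_i=\sk[x_i]/(x_i^{e_i+1})$, so $\soc(R_i)=(x_i^{e_i})$, and Definition \ref{CSdef} presents $R_1\#_{\sk}\cdots\#_{\sk}R_n$ as $R/J$ with $J=\langle x_ix_j\ (i\ne j),\ x_i^{e_i}-x_1^{e_1}\ (1\le i\le n)\rangle$. But $\langle x_ix_j:i\ne j\rangle=I_{2,n}$ by definition, and $\langle x_i^{e_i}-x_1^{e_1}:1\le i\le n\rangle=\langle x_i^{e_i}-x_1^{e_1}:2\le i\le n\rangle=L_{2,n}$ (the $i=1$ generator being zero). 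Hence $J=I_{2,n}+L_{2,n}=L$, and so $R/L\simeq R_1\#_{\sk}\cdots\#_{\sk}R_n$ literally by the definition of connected sum.

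For the resolution statement, I would invoke the standard mapping-cone construction: given the embedding $\psi:\omega_{R/I_{2,n}}(-e)\hookrightarrow R/I_{2,n}$ from Theorem \ref{m=2} with cokernel $R/L$, lift $\psi$ to a map of free resolutions $\widetilde\psi:\mathbb{G}^{2,n}_\bullet(-e)\to\mathbb{F}^{2,n}_\bullet$ over $R$, where $\mathbb{F}^{2,n}_\bullet$ resolves $R/I_{2,n}$ (Proposition \ref{Gal}) and $\mathbb{G}^{2,n}_\bullet=\Hom_R(\mathbb{F}^{2,n}_\bullet,R)$ resolves $\omega_{R/I_{2,n}}$ (Remark \ref{MinOmg}). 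Since $\psi$ is injective, the mapping cone $\mathrm{Cone}(\widetilde\psi)$ is a free resolution of $R/L$, with $\mathrm{Cone}(\psi)_i=G^{2,n}_{n-2-i+1}\oplus F^{2,n}_i$ in the notation used in the Proposition before this section. Minimality is the point that needs a genuine argument: one must check that the lifted comparison map $\widetilde\psi$ has entries in the maximal ideal $\mathfrak m=\langle x_1,\ldots,x_n\rangle$, so that all the differentials in the cone do too. This follows because the generators of $L/I_{2,n}$ produced by $\psi$, namely the minors $x_i^{e_i}-x_1^{e_1}$, lie in $\mathfrak m^2$ (each $e_i>1$), so $\psi$ itself is represented by a matrix over $\mathfrak m$; one then propagates this degree-reason up the resolution, using that the Hilbert series computation in the proof of Theorem \ref{m=2} already shows the total Betti number of $R/L$ equals $\sum_i\big(\beta_i(R/I_{2,n})+\beta_{n-1-i}(R/I_{2,n})\big)$, i.e. no cancellation occurs. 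The main obstacle is precisely this minimality check — verifying that the comparison map can be (equivalently: necessarily is) chosen with entries in $\mathfrak m$; I would handle it by the degree/Hilbert-series bookkeeping just indicated, which shows the cone's ranks already equal the minimal Betti numbers forced by $\mathrm{Tor}$, leaving no room for a split summand.
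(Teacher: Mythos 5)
Your treatment of the first two assertions coincides with the paper's: the paper likewise notes $\soc(R_i)=(x_i^{e_i})$, matches $L$ with the defining ideal of Definition \ref{CSdef} so that $R/L\simeq R_1\#_{\sk}\cdots\#_{\sk}R_n$, obtains Gorensteinness from Remark \ref{CSrmk}, and then combines Corollary \ref{m=2Omega} with a lift of $\psi$ to $\mathbb{G}^{2,n}_\bullet\to\mathbb{F}^{2,n}_\bullet$ to form the mapping cone. Up to the statement that the cone is a free resolution of $R/L$, your argument is the paper's argument, written out in more detail.

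The place where you try to do more than the paper --- minimality of the cone, which the paper simply asserts --- is where your argument has a genuine gap. Two specific problems. First, $\im(\psi)\subseteq\m^2(R/I_{2,n})$ only forces the bottom component $\widetilde\psi_0$ of the comparison map to have entries in $\m$; this does not ``propagate'': for $R=\sk[x]$ the map $R/(x)\to R/(x^2)$, $1\mapsto x$, has image inside $\m\cdot R/(x^2)$, yet every lift of it to the minimal resolutions has a unit entry in homological degree one. What minimality of the cone actually requires is that the induced maps $\Tor_i^R(\psi)\otimes\sk$ vanish for all $i$ (equivalently $\beta_i(R/L)=\beta_i(R/I_{2,n})+\beta_{i-1}(\omega_{R/I_{2,n}})$), a statement independent of the chosen lift that needs its own proof. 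Second, the Hilbert-series bookkeeping cannot supply that proof: the alternating sum of the terms of any graded free resolution, minimal or not, returns the same Hilbert series, so $\tilde{H}_{R/L}(t)$ is blind to cancellation and cannot show that the ranks of the cone equal the Betti numbers of $R/L$; nor does a pure degree count in the weighted grading exclude constant entries in the intermediate components $\widetilde\psi_i$, since generator degrees of source and target can coincide for suitable $(e_1,\ldots,e_n)$. So you have correctly isolated the real issue (which the paper leaves unaddressed), but the justification you propose is not a proof; one needs either a direct verification that the maps induced on $\Tor$ vanish (for instance via the finer multigraded structure of $\mathbb{F}^{2,n}_\bullet$ and $\mathbb{G}^{2,n}_\bullet$ together with the explicit form of $\psi$) or an independent computation of the Betti numbers of the connected sum.
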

\begin{proof}
First note that $\soc(R_i)=\langle x_i^{e_i}\rangle$. By Definition \ref{CSdef}, we have  $R_1\#_{\sk}\ldots\#_{\sk}R_{n}\simeq R/L$, and hence $R/L$ is Gorenstein by Remark \ref{CSrmk}. Using Corollary \ref{m=2Omega}, we get $\omega_{R/I_{m,n}}(-e)\simeq L/I_{m,n}$. Let $\mathbb{F}^{m,n}_{\bullet}$ and $\mathbb{G}^{m,n}_{\bullet}$ be minimal free resolutions of $R/I_{m,n}$ and $\omega_{R/I_{m,n}}$, respectively as in Proposition \ref{Gal} and Remark \ref{MinOmg}. Then ${\rm Cone}(\psi)$ is a minimal free resolution of $R/L$.

\end{proof}

\end{document}